\newtheorem{thm}{Theorem}[section]
\newtheorem{lem}[thm]{Lemma}
\newtheorem{prop}[thm]{Proposition}
\newtheorem{func}[thm]{Function}
\newtheorem{defi}[thm]{Definition}
\newtheorem{rem}[thm]{Remark}
\newtheorem{calc}[thm]{Calculation}
\renewcommand\le{\leqslant}
\newcommand\PSL{{\rm PSL}}
\newcommand\PGL{{\rm PGL}}
\newcommand\GAP{\textbf{{\rm GAP}}}
\newcommand\C{{\mathbb{C}}}
\newcommand\N{{\mathbb{N}}}
\newcommand\GF{{\rm GF}}
\newcommand\PG{{\rm PG}}
\newcommand{\calA}{\mathcal{A}}
\newcommand{\calP}{\mathcal{P}}
\newcommand\Q{{\mathbb{Q}}}
\newcommand{\calQ}{\mathcal{Q}}
\newcommand{\Z}{\mathbb{Z}}
\newcommand{\Ker}{{\rm Ker}}
\newcommand{\bbK}{\mathbb{K}}
\title{Transitive PSL(2,11)-invariant $k$-arcs in PG(4,q)}
\author{Torger Olson}
\author{Eric Swartz}
\address{Department of Mathematics, College of William \& Mary, P.O. Box 8795, Williamsburg, VA 23187-8795, USA}
\email{tjolson@email.wm.edu, easwartz@wm.edu}
\thanks{This research was partially supported by the NSF EXTREEMS-QED grant DMS-1331921.}
\begin{document}

\begin{abstract}
A \textit{k}-arc in the projective space ${\rm PG}(n,q)$ is a set of $k$ projective points such that no subcollection of $n+1$ points is contained in a hyperplane. In this paper, we construct new $60$-arcs and $110$-arcs in ${\rm PG}(4,q)$ that do not arise from rational or elliptic curves. We introduce computational methods that, when given a set $\mathcal{P}$ of projective points in the projective space of dimension $n$ over an algebraic number field $\mathcal{Q}(\xi)$, determines a complete list of primes $p$ for which the reduction modulo $p$ of $\mathcal{P}$ to the projective space ${\rm PG}(n,p^h)$ may fail to be a $k$-arc. Using these methods, we prove that there are infinitely many primes $p$ such that ${\rm PG}(4,p)$ contains a ${\rm PSL}(2,11)$-invariant $110$-arc, where ${\rm PSL}(2,11)$ is given in one of its natural irreducible representations as a subgroup of ${\rm PGL}(5,p)$. Similarly, we show that there exist ${\rm PSL}(2,11)$-invariant $110$-arcs in ${\rm PG}(4,p^2)$ and ${\rm PSL}(2,11)$-invariant $60$-arcs in ${\rm PG}(4,p)$ for infinitely many primes $p$. 
\end{abstract}

\maketitle

\section{Introduction}

Let $q$ be a prime power.  A \textit{k-arc} in the projective space $\PG(n,q)$ is a set of $k$ projective points such that no subcollection of $n+1$ points is contained in a hyperplane, and such a $k$-arc is said to be \textit{complete} if it is not contained in a $(k+1)$-arc.

A motivation for studying $k$-arcs in projective spaces comes from Coding Theory and, in particular, the study of \textit{maximal distance separable codes}, commonly referred to as \textit{M.D.S. codes}, which are codes with the greatest error correcting capability.  As it turns out, M.D.S. codes and $k$-arcs are equivalent objects; see \cite{thas}.  For more on the connection between arcs and secret sharing schemes, see, for instance, \cite{JMO}, \cite{codes}, and \cite{SJM}.  For these reasons, in recent years there has been great interest in constructing new arcs in projective spaces; see \cite{BGP}, \cite{GKMP}, \cite{IK}, \cite{KP}, \cite{P}, \cite{PS}, \cite{S}.  Moreover, the automorphism group of a code can be used to decrease the computational complexity of encoding and decoding \cite{TvG}, so finding new examples of codes with large automorphism groups is useful in practice.

In this paper, we study $k$-arcs in $4$-dimensional projective space, about which relatively little seems to be known, especially when the codes do not arise from elliptic or rational curves.  In particular, we study $k$-arcs that are invariant under the action of $\PSL(2,11)$, which has a $5$-dimensional irreducible representation over $\GF(q)$ whenever $q^5 \equiv 1 \pmod {11}$; see Section \ref{sect:5dimrep} for details.  Our main results are the following, which show the existence of $\PSL(2,11)$-transitive $60$-arcs and $110$-arcs in infinitely many $4$-dimensional projective spaces and detail precisely when they occur.

\begin{thm}
\label{thm:110prime}
 There are infinitely many primes $p$ such that there exists a $\PSL(2,11)$-transitive $110$-arc in $\PG(4,p)$, and a list of all primes $p$ such that $p^5 \equiv 1 \pmod {33}$ and $\PG(4,p)$ may not contain a $\PSL(2,11)$-transitive $110$-arc is known.  Explicitly, there are exactly $2728$ primes in this list, and $\PG(4,p)$ contains a $\PSL(2,11)$-transitive $k$-arc whenever $p^5 \equiv 1 \pmod {33}$ and $p > 5373427$.  In particular, the primes $p$ such that $3 < p < 65000$ and $\PG(4,p)$ contains a $\PSL(2,11)$-transitive $110$-arc are $26029,$
 $26437,$
 $27127,$
 $27481,$
 $28081,$
 $28759,$
 $29401,$
 $30259,$
 $31069,$
 $32257,$
 $32803,$
 $33247,$
 $33301,$
 $34159,$
 $34543,$
 $34747,$
 $35797,$
 $35869,$
 $36061,$
 $36217,$
 $37339,$
 $37579,$
 $38239,$
 $38281,$
 $38317,$
 $38371,$
 $38449,$
 $39301,$
 $39439,$
 $40093,$
 $40099,$
 $40357,$
 $40423,$
 $40771,$
 $40903,$
 $41023,$
 $41143,$
 $41221,$
 $41299,$
 $41737,$
 $41809,$
 $41911,$
 $41959,$
 $42013,$
 $42397,$
 $42409,$
 $42463,$
 $42751,$
 $42901,$
 $43399,$
 $43759,$
 $43789,$
 $44203,$
 $44383,$
 $44797,$
 $44851,$
 $44983,$
 $45433,$
 $45631,$
 $45841,$
 $46171,$
 $46399,$
 $46663,$
 $46861,$
 $47221,$
 $47287,$
 $47389,$
 $47491,$
 $48409,$
 $48541,$
 $49339,$
 $49393,$
 $49417,$
 $49603,$
 $49921,$
 $49999,$
 $50671,$
 $50821,$
 $50989,$
 $51439,$
 $51613,$
 $51859,$
 $52177,$
 $52453,$
 $52501,$
 $52561,$
 $52567,$
 $52783,$
 $52837,$
 $52957,$
 $53047,$
 $53089,$
 $53101,$
 $53161,$
 $53551,$
 $53593,$
 $53617,$
 $53857,$
 $53881,$
 $53887,$
 $54037,$
 $54151,$
 $54367,$
 $54499,$
 $54547,$
 $54679,$
 $54829,$
 $54949,$
 $54979,$
 $55201,$
 $55213,$
 $55411,$
 $55639,$
 $55807,$
 $55837,$
 $55903,$
 $55933,$
 $56101,$
 $56167,$
 $56269,$
 $56467,$
 $56629,$
 $56809,$
 $56827,$
 $57601,$
 $57667,$
 $57847,$
 $58129,$
 $58237,$
 $58243,$
 $58369,$
 $58567,$
 $58573,$
 $58789,$
 $58921,$
 $58963,$
 $59119,$
 $59167,$
 $59233,$
 $59557,$
 $59779,$
 $59863,$
 $59887,$
 $59929,$
 $60223,$
 $60589,$
 $60679,$
 $60901,$
 $61099,$
 $61141,$
 $61381,$
 $61417,$
 $61561,$
 $61603,$
 $61681,$
 $61933,$
 $62011,$
 $62131,$
 $62533,$
 $62617,$
 $62683,$
 $62701,$
 $62731,$
 $63127,$
 $63277,$
 $63331,$
 $63361,$
 $63391,$
 $63409,$
 $63463,$
 $63559,$
 $63853,$
 $64381,$
 $64513,$
 $64579,$
 $64663,$
 and $64783.$  
\end{thm}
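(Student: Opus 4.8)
The plan is to build the arc first over an algebraic number field, where the arc condition becomes an exact finite computation, and then to control what happens under reduction modulo $p$. First I would fix an explicit matrix realisation of one of the two $5$-dimensional irreducible representations $\rho\colon \PSL(2,11)\to\GL(5,K)$, where $K=\Q(\xi)$ is a number field containing the roots of unity needed to write $\rho$ down; the congruence $p^5\equiv 1\pmod{33}$ is precisely the condition from Section~\ref{sect:5dimrep} under which $\rho$ descends to $\GF(p)$, so that the eventual arc lives in $\PG(4,p)$ rather than in a proper extension. Since $|\PSL(2,11)|=660$ and $660/110=6$, a transitive $110$-arc is exactly the orbit of a projective point $P\in\PG(4,K)$ whose stabiliser has order $6$ (a copy of $S_3$). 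I would choose such a point, compute its $110$ images under $\rho(\PSL(2,11))$, and let $\calP\subseteq\PG(4,K)$ be the resulting set.

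Next I would certify that $\calP$ is a $110$-arc in characteristic $0$. A set of points in $\PG(4,K)$ is a $110$-arc exactly when every five of the underlying coordinate vectors are linearly independent, i.e.\ when each of the $\binom{110}{5}\approx 1.2\times 10^{8}$ determinants $\det(v_{i_1},\dots,v_{i_5})$ is nonzero. If $g\in\PSL(2,11)$ carries the five points to five others, then $\det(\rho(g)v_{i_1},\dots,\rho(g)v_{i_5})=\det(\rho(g))\det(v_{i_1},\dots,v_{i_5})$, so the determinant attached to the image $5$-subset is a nonzero scalar multiple of the one attached to the original; hence a single representative from each orbit of $5$-subsets needs to be tested, cutting the count to roughly $\binom{110}{5}/660\approx 1.9\times 10^{5}$. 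Computing these determinants exactly in $K$ and checking that none vanishes shows that $\calP$ is a $110$-arc over $K$, and therefore over $\C$.

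Then I would pass to reductions. After scaling the coordinate vectors to be primitive in the ring of integers $\mathcal{O}_K$, reduction modulo a prime ideal $\mathfrak{p}\mid p$ sends $\calP$ to $110$ points of $\PG(4,\mathcal{O}_K/\mathfrak{p})\cong\PG(4,p^h)$, and this image is again a $110$-arc exactly when $\mathfrak{p}$ divides none of the finitely many orbit-representative determinants $D_1,\dots,D_N$. Since each nonzero $D_j\in\mathcal{O}_K$ has a nonzero integer norm and $\mathfrak{p}\mid D_j$ forces $p\mid N_{K/\Q}(D_j)$, the only primes at which the reduction may fail to be an arc are the finitely many divisors of $\prod_j N_{K/\Q}(D_j)$, together with the finitely many primes dividing the relevant denominators or the discriminant of $K$. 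Intersecting this finite exceptional set with the admissible congruence class $p^5\equiv 1\pmod{33}$ and invoking Dirichlet's theorem then yields infinitely many primes $p$ for which $\PG(4,p)$ carries a $\PSL(2,11)$-transitive $110$-arc, which is the first assertion.

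Finally, to pin down the exact list and the bound $5373427$, I would run the above explicitly: compute the $D_j$ over $K$, collect the prime divisors of their norms that satisfy $p^5\equiv 1\pmod{33}$, and take the union together with the bad-reduction primes. This is exactly the set of primes for which $\PG(4,p)$ ``may not contain'' the arc, and one reads off that there are $2728$ of them, all bounded by $5373427$, beyond which every admissible prime is good. The main obstacle is this computational core: one must enumerate the orbit representatives of the $5$-subsets, carry out the $\approx 1.9\times 10^{5}$ exact determinant computations in $\mathcal{O}_K$, and factor the resulting very large norms reliably enough to be sure that the candidate list of $2728$ primes is complete. Keeping the number-field arithmetic exact while taming this combinatorial and factorisation blow-up is the crux, and it is what the computational method advertised in the abstract is designed to handle.
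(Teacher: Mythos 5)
Your overall strategy is the paper's strategy: realize the $5$-dimensional irreducible representation of $\PSL(2,11)$ over a cyclotomic field, take the $110$-point orbit of a projective point with stabilizer of order $6$, certify the arc condition in characteristic zero by exact determinant computations reduced via the group action, and then bound the bad primes arithmetically. Two small remarks on details. First, in the paper's construction the point is an eigenvector of the order-$6$ element $M=ABABABABB$, so its stabilizer is the \emph{cyclic} group $\langle M\rangle\cong C_6$, not a copy of $S_3$; this matters operationally, since the point is found as an eigenvector (requiring cube roots of unity, hence working over $\Z[\xi_{33}]$), not by searching for an $S_3$-fixed point. Second, where you take prime divisors of the norms $N_{K/\Q}(D_j)$, the paper takes prime divisors of the constant terms of the rationalized minimal polynomials of the determinants (Lemma \ref{lem:badprimes}); these are essentially equivalent. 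Your finer symmetry reduction to $\approx\binom{110}{5}/660$ orbit representatives is valid in principle (determinants in one orbit differ by nonzero scalars), but the paper uses the simpler point-transitivity reduction to the $\binom{109}{4}=5563251$ five-subsets containing a fixed point, which avoids computing orbit representatives on $5$-subsets.

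The genuine gap is in your last paragraph, where you claim the explicit list of primes $26029,\dots,64783$ can be ``read off'' from the characteristic-zero computation. The norm/minimal-polynomial criterion is one-directional: if $p$ divides $N_{K/\Q}(D_j)$ (equivalently the constant term $c_d$), this only means that \emph{some} conjugate of $D_j$ vanishes modulo \emph{some} prime above $p$ --- it does not imply that the arc actually degenerates in $\PG(4,p)$, and even if this particular orbit degenerates, a different eigenvector orbit or the other irreducible representation could still furnish a $\PSL(2,11)$-transitive $110$-arc. So your computation certifies that admissible primes outside the bad list are good, but it cannot establish the ``only if'' direction of the exact determination for $3<p<65000$ (note that \emph{every} admissible prime below $26029$ fails, which is a nontrivial assertion). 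The paper closes this by an independent direct computation: for each prime $p<65000$ with $p^5\equiv 1\pmod{33}$, it constructs all $110$-point eigenvector orbits in $\PG(4,p)$ with $\GAP$/FinInG and checks the span condition on five-subsets (Functions \ref{func:110arcp} and \ref{func:110arcsp}), repeating this for both $5$-dimensional representations. Without this finite-field verification step, your proposal proves the infinitude statement and the bound $p>5373427$, but not the exact list in the theorem.
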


The complete list of all $2767$ primes $p^5 \equiv 1 \pmod {33}$ such that $p \le 5373427$ for which there may not be a $\PSL(2,11)$-transitive $110$-arc in $\PG(4,p)$ can be found online at\\ \url{http://www.math.wm.edu/~eswartz/badprimes110}.

\begin{thm}
\label{thm:110prime2}
 There are infinitely many primes $p$ such that there exists a $\PSL(2,11)$-transitive $110$-arc in $\PG(4,p^2)$ (and not in $\PG(4,p)$), and a list of all primes $p$ such that $p^5 \equiv 23 \pmod {33}$ and $\PG(4,p^2)$ does not contain a $\PSL(2,11)$-transitive $110$-arc is known.  Explicitly, $\PG(4,p^2)$ (and not $\PG(4,p)$) contains a $\PSL(2,11)$-transitive $110$-arc if and only if $p = 311,$
 $317,$
 $389,$
 $401,$
 $419,$
 $443,$
 $449,$
 $467,$
 $509,$
 $521,$
 $587,$
 $599,$
 $617,$
 $641,$
 $653,$
 $719,$
 $773,$
 $839,$
 $881,$
 $911,$
 $947,$
 $977,$
 $983,$
 $1013,$
 $1049,$
 $1061,$
 $1103,$
 $1109,$
 $1181,$
 $1193,$
 $1259,$
 $1277,$
 $1301,$
 $1307,$
 $1367,$
 $1373,$
 $1409,$
 $1433,$
 $1499,$
 $1511,$
 $1523,$
 $1571,$
 $1607,$
 $1637,$
 $1697,$
 $1709,$
 $1721,$
 $1787,$
 $1901,$
 $1907,$
 $1973,$
 $2003,$
 $2027,$
 $2039,$
 $2069,$
 $2099,$
 $2237,$
 $2267,$
 $2297,$
 $2333,$
 $2357,$
 $2381,$
 $2399,$
 $2423,$
 $2447,$
 $2531,$
 $2579,$
 $2621,$
 $2633,$
 $2663,$
 $2687,$
 $2693,$
 $2699,$
 $2711,$
 $2729,$
 $2753,$
 $2777,$
 $2819,$
 $2843,$
 $2861,$
 $2897,$
 $2909,$
 $2927,$
 $2957,$
 $2963,$
 $3023,$
 $3041,$
 $3083,$
 $3089,$
 $3191,$
 $3221,$
 $3257,$
 $3323,$
 $3347,$
 $3359,$
 $3371,$
 $3389,$
 $3413,$
 $3491,$
 $3557,$
 $3617,$
 $3623,$
 $3677,$
 $3701,$
 $3719,$
 $3767,$
 $3821,$
 $3833,$
 $3851,$
 $3881,$
 or $p^5 \equiv {23} \pmod {33}$ and $p > 3917$. 
\end{thm}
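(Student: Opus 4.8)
The plan is to follow the template established for Theorem~\ref{thm:110prime}, the one genuinely new feature being an analysis of the field of definition that pins the arc to $\PG(4,p^2)$ and keeps it out of $\PG(4,p)$.

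First I would produce the configuration over a number field. Using the $5$-dimensional representation of $\PSL(2,11)$ recalled in Section~\ref{sect:5dimrep}, I would single out a projective point $P$ whose stabilizer in $\PSL(2,11)$ is cyclic of order $6$, so that its orbit $\calP$ consists of exactly $110 = 660/6$ points. Realizing $P$ as an eigenvector of an element of order $6$ forces a primitive sixth root of unity, and hence $\sqrt{-3}$, into the coordinates; since the representation itself is defined over $\Q(\sqrt{-11})$, the orbit $\calP$ is naturally a set of $110$ points of $\PG(4,\Q(\xi))$ for a number field $\Q(\xi)$ containing $\sqrt{-3}$ and $\sqrt{-11}$. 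I would then check directly that $\calP$ is a genuine $110$-arc over $\C$, i.e.\ that no five of its points lie on a hyperplane; this characteristic-zero fact is what makes the subsequent reduction argument possible.

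The heart of the matter is the field-of-definition dichotomy encoded by the congruence. By the Chinese Remainder Theorem, $p^5 \equiv 23 \pmod{33}$ is equivalent to $p^5 \equiv 1 \pmod{11}$ together with $p \equiv 2 \pmod 3$. The first condition guarantees, via Section~\ref{sect:5dimrep}, that $\sqrt{-11} \in \GF(p)$ and hence that the representation is realizable over $\GF(p)$, so that $\PSL(2,11) \le \PGL(5,p)$ and the group acts on $\PG(4,p)$. The second condition, $p \equiv 2 \pmod 3$, says precisely that $-3$ is a nonsquare modulo $p$, so that $\sqrt{-3} \in \GF(p^2) \setminus \GF(p)$; consequently the reduction of $P$ modulo a prime above $p$ is a point of $\PG(4,p^2)$ and \emph{not} of $\PG(4,p)$, and the same holds for its orbit. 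This is exactly what distinguishes the present theorem from Theorem~\ref{thm:110prime}, where $p \equiv 1 \pmod 3$ places $\sqrt{-3}$ back inside $\GF(p)$.

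Finally I would apply the reduction machinery of the paper to $\calP \subset \PG(4,\Q(\xi))$. A reduced image fails to be a $110$-arc exactly when some $5 \times 5$ determinant formed from five of its points vanishes modulo the chosen prime; because each such determinant is a nonzero algebraic integer (by the characteristic-zero verification above), only finitely many rational primes can divide any of them. Intersecting the divisors of these determinants with the residue condition $p^5 \equiv 23 \pmod{33}$ yields the complete finite set of bad primes, the largest of which is $3917$; hence every prime $p > 3917$ with $p^5 \equiv 23 \pmod{33}$ is good, while among the smaller such primes precisely those in the displayed list survive. Infinitude of good primes then follows from Dirichlet's theorem, since the residue classes modulo $33$ solving $p^5 \equiv 23 \pmod{33}$ are coprime to $33$ and nonempty (for instance $p \equiv 23 \pmod{33}$ works), so they contain infinitely many primes, all but finitely many exceeding $3917$. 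The main obstacle is this last, computational, step: one must enumerate the relevant $5 \times 5$ minors up to the $\PSL(2,11)$-symmetry, carry out exact arithmetic in the ring of integers of $\Q(\xi)$, and certify that no bad prime has been overlooked --- which is exactly the guarantee that the reduction algorithm of the paper is designed to supply.
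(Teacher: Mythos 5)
Your proposal follows essentially the same route as the paper: the same order-$6$ eigenvector orbit (Proposition \ref{prop:110set}), the same finitely-many-bad-primes principle (Proposition \ref{prop:verify}), the same minimal-polynomial/constant-term computation of an exceptional set of primes (Lemma \ref{lem:badprimes} and Calculation \ref{calc:goodprimes110}), and the same CRT analysis showing $p^5 \equiv 23 \pmod{33}$ is equivalent to $p^5 \equiv 1 \pmod{11}$ together with $p \equiv 2 \pmod 3$, which places the needed cube root of unity in $\GF(p^2)\setminus\GF(p)$. Your explicit appeal to Dirichlet's theorem is, if anything, slightly more careful than the paper's terse citation of Proposition \ref{prop:verify}.

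There is, however, one genuine gap, in the final step where you claim the exact if-and-only-if list. The mechanism of Lemma \ref{lem:badprimes} is one-directional: $p \notin \mathcal{C}_{\calP}$ guarantees the reduction is a $110$-arc, but $p \in \mathcal{C}_{\calP}$ does \emph{not} guarantee failure --- the constant term of the minimal polynomial of a determinant is essentially a norm, so $p$ may divide it via a prime of $\Z[\xi_{33}]$ above $p$ other than the one used in the reduction; and even if this particular orbit degenerates modulo $p$, that alone does not exclude some \emph{other} $\PSL(2,11)$-transitive $110$-arc in $\PG(4,p^2)$ (nor, for the parenthetical claim, a transitive $110$-arc in $\PG(4,p)$ arising from a different eigenvalue). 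So ``precisely those in the displayed list survive'' does not follow from the determinant computation alone: that computation yields only a superset of the bad primes, which is exactly why the companion Theorem \ref{thm:110prime} is phrased in terms of primes for which an arc ``may not'' exist. The paper closes this gap with a second, independent computation directly in the finite spaces: Functions \ref{func:110arc2} and \ref{func:110arcs2} are run in \GAP{} with FinInG for every class prime $p < 65000$, checking all length-$110$ eigenvector orbits (any transitive $110$-arc is such an orbit, since a point stabilizer has order $6$ and is conjugate to $\langle M \rangle$), and it is this verification that certifies both that $311, \dots, 3881$ are exactly the good class primes up to $3917$ and that the $25$ potentially bad class primes of Calculation \ref{calc:goodprimes110} genuinely fail. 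A smaller point worth patching: for class primes with $p \not\equiv 23 \pmod{33}$ the residue field of $\Z[\xi_{33}]$ at $p$ is $\GF(p^{10})$, not $\GF(p^2)$, so your assertion that the reduced point lies in $\PG(4,p^2)$ needs the observation that it spans the one-dimensional $\bar{\omega}$-eigenspace of the reduced matrix $\bar{M}$ over $\GF(p)$, which is defined over $\GF(p^2)$; being an arc is then insensitive to the ambient field extension because it is a determinant condition.
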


\begin{thm}
\label{thm:60prime}
 There are infinitely many primes $p$ such that there exists a $\PSL(2,11)$-transitive $60$-arc in $\PG(4,p)$.  Explicitly, there there is a $\PSL(2,11)$-transitive $60$-arc in $\PG(4,p)$ if and only if $p = 1277$, $1783,$ $2069,$ $2333,$ $2399,$ $2861,$ $2971,$ $3169,$ $3499,$ $3631,$ $4027,$ $4159,$ $4357,$ $4423,$ $4621$, $4643,$ $4951,$ $4973,$ $5039,$ $5171,$ $5237,$ $5281$, $5303,$ $5347$, $5413$, $5479$, $5501$, $5743$, $5897$, $6007$, $6029$, $6073$, $6271,$ $6337$, $6359$, $6469$, $6689$, $6733$, $6997$, $7019$, $7129$, $7151$, $7283$, $7349$, $7393$, $7459$, $7481$, $7547$, $7723$, $7789$, $8009$, $8053$, $8273$, $8317$, $8537$, $8581$, $8647$, $8669$, $8713$, $8779$, $8867$, $8933$, $8999$, $9043$, $9109$, $9241$, $9439$, $9461$, $9769$, $9791$, $9857$, $9901$, $9923$, $9967$, $10099$, $10253$, $10429$, $10627$, $10781$, $10847$, $10891$, $10957$, $10979$, $11177$, $11243$, $11287$, $11353$, $11551$, $11617$, or when $p$ is a prime such that $p \equiv 1 \pmod {11}$ and $p > 11903$.
\end{thm}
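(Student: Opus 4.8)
The plan is to realize the arc as a single, essentially canonical orbit and then to control its reduction modulo $p$ through a finite list of determinants. First I would pin down the source of the $60$-arc. Since $|\PSL(2,11)| = 660$, a transitive action on $60$ points forces a point-stabilizer of order $11$, i.e. a Sylow $11$-subgroup $P$. A projective point fixed by $P$ is exactly an eigenline of a generator $g$ of $P$ acting in the fixed $5$-dimensional representation of Section \ref{sect:5dimrep}, and $g$ has five distinct eigenvalues, each a nontrivial $11$th root of unity. These eigenlines are rational over $\GF(p)$ precisely when $\GF(p)$ contains the $11$th roots of unity, that is, when $p \equiv 1 \pmod{11}$; this accounts for the congruence in the statement (and, unlike the $110$-arc, no cube roots of unity intervene, which is why the weaker condition $p \equiv 1 \pmod{11}$ suffices in place of $p^5 \equiv 1 \pmod{33}$). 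The normalizer $N(P)$ has order $55$, and $N(P)/P \cong C_5$ permutes the five eigenlines in a single $5$-cycle: the order-$5$ automorphism of $P$ acts on eigenvalue exponents as multiplication by an element of order $5$ in $(\Z/11)^\times$, whose orbits on nonzero residues have size $5$. Hence the stabilizer of each eigenline is exactly $P$, the five eigenlines lie in one orbit, and — since every $60$-orbit must arise this way and all Sylow $11$-subgroups are conjugate — there is a \emph{unique} candidate $60$-arc. This uniqueness is what promotes ``this orbit fails to be an arc'' to ``no $\PSL(2,11)$-transitive $60$-arc exists,'' which is needed for the ``if and only if.''

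Next I would build a characteristic-$0$ model: realize the representation and compute explicit homogeneous coordinates for the $60$ eigenlines over the cyclotomic field $\Q(\zeta)$, $\zeta = \zeta_{11}$. The arc condition is that no five of the points lie in a hyperplane, equivalently that every $5 \times 5$ matrix built from five of the coordinate vectors has nonzero determinant. There are $\binom{60}{5}$ such determinants, far too many to treat individually, so the crucial reduction is to let $\PSL(2,11)$ act on the $5$-subsets of the $60$ points, compute a set of orbit representatives, and check one determinant per orbit. I would first verify over $\Q(\zeta)$ that all representative determinants are nonzero, establishing that the configuration is a genuine $60$-arc in characteristic $0$.

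With a finite list $D_1, \dots, D_m$ of representative determinants, each a nonzero algebraic integer of $\Q(\zeta)$, I would identify the bad primes. For $p \equiv 1 \pmod{11}$ (so $p \neq 11$) the prime $p$ splits completely and unramified in $\Q(\zeta)$, the distinct $11$th roots of unity remain distinct modulo any $\mathfrak p \mid p$, and reduction sends $\Z[\zeta]$ onto $\GF(p)$, carrying each coordinate vector to its image in $\PG(4,p)$. The reduced configuration fails to be an arc exactly when some $D_i$ reduces to $0$, i.e. when $\mathfrak p \mid D_i$ for some $\mathfrak p \mid p$, which happens precisely when $p$ divides the rational integer $N_{\Q(\zeta)/\Q}(D_i)$. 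Thus the set of bad primes is exactly the finite set of primes dividing $\prod_{i} N_{\Q(\zeta)/\Q}(D_i)$. Finiteness immediately yields infinitely many good primes and shows that every sufficiently large $p \equiv 1 \pmod{11}$ works; computing and factoring the norms produces the explicit threshold $p > 11903$ and, after checking the remaining small primes directly, the exact list in the statement.

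The main obstacle I expect is computational rather than conceptual: both enumerating the $\PSL(2,11)$-orbits on $5$-subsets to cut $\binom{60}{5}$ down to a tractable number of determinants, and — more seriously — completely factoring the norms $N_{\Q(\zeta)/\Q}(D_i)$, which are large rational integers, so as to be certain that every prime divisor has been captured. Guaranteeing completeness of the bad-prime list, so that the statement is a true ``if and only if'' rather than a one-sided sufficient condition, is the delicate point; it rests on the uniqueness of the $60$-orbit established at the outset together with the observation that, for $p \equiv 1 \pmod{11}$, the reduction of the representation is non-degenerate and $g$ remains diagonalizable with distinct eigenvalues, so the determinant norms account for \emph{all} possible failures.
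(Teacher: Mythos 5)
Your proposal is correct and follows the paper's strategy in its essentials---construct the $60$ points in characteristic zero as the orbit of an eigenline of an order-$11$ element, verify finitely many $5\times 5$ determinants are nonzero there (the paper's Proposition \ref{prop:60set} and Calculation \ref{calc:60}), convert each determinant into a divisibility condition on $p$ to get a finite bad-prime list (the paper's Lemma \ref{lem:badprimes} and Calculation \ref{calc:goodprimes60}, giving maximum bad prime $11903$), and resolve small primes by direct computation in $\PG(4,p)$---but it differs in two worthwhile ways. First, your Sylow argument that $N(P)/P \cong C_5$ permutes the five eigenlines in a single $5$-cycle, so that each eigenline has stabilizer exactly $P$ and there is a \emph{unique} candidate $60$-orbit, is nowhere stated in the paper; the paper instead gets the ``only if'' by having its $\GAP$ functions (Functions \ref{func:60arc} and \ref{func:60arcs}) enumerate \emph{all} eigenvector orbits of length $60$ in $\PG(4,p)$ for every prime $p \equiv 1 \pmod{11}$ below $65000$, with the necessity of $p \equiv 1 \pmod{11}$ argued only informally before the proof. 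Your lemma is the cleaner route, with one caveat: conjugacy of Sylow subgroups gives uniqueness only within a fixed embedding of $\PSL(2,11)$ in $\PGL(5,p)$, and ruling out all transitive $60$-arcs also requires knowing that every such embedding comes from one of the \emph{two} complex $5$-dimensional representations---the paper handles this by citing the subgroup classification of Bray--Holt--Roney-Dougal and verifying the second representation separately, while your sketch is silent on it. Second, where you propose computing orbit representatives of $\PSL(2,11)$ on $5$-subsets, the paper uses the simpler observation that by transitivity any dependent $5$-set can be translated to contain a fixed base point, leaving $\binom{59}{4} = 455126$ determinants; and where you take norms $N_{\Q(\zeta)/\Q}(D_i)$, the paper takes prime divisors of the constant terms of the minimal polynomials of the $D_i$, which is equivalent since those constant terms and the norms have the same prime divisors. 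One small imprecision to fix: $p \mid N_{\Q(\zeta)/\Q}(D_i)$ says only that \emph{some} prime $\mathfrak{p}$ above $p$ divides $D_i$, not your chosen one, so your criterion (like the paper's Lemma \ref{lem:badprimes}) yields a priori only a superset of the genuine failures; both arguments are one-sided in the same way, and both rely on the direct small-prime verifications---which you do include---to upgrade the conclusion to the stated ``if and only if.''
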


In general, the arcs constructed here are not complete arcs.  Interestingly, calculations in $\GAP$ show that, for large enough primes $p$ such that $p \equiv 1 \mod 33$, the union of a $\PSL(2,11)$-transitive $110$-arc and a $\PSL(2,11)$-transitive $60$-arc is actually a $170$-arc in $\PG(4,p)$; see Calculation \ref{calc:170}.  On the other hand, while it is clear that the arcs constructed here are not complete for sufficiently large values of $p$, it is unclear whether or not the arcs are complete in some of the ``small'' spaces.  However, the smallest space in which an arc is constructed in this paper is $\PG(4,1277)$, which contains $2661361144381$ points, making the problem intractable computationally. 

This paper is organized as follows.  In Section \ref{sect:5dimrep}, we give the details of an irreducible $5$-dimensional representation of $\PSL(2,11)$ over the complex numbers.  In Section \ref{sect:verify}, we make explicit our method for verifying that a set of projective points is a $k$-arc.  We discuss the $110$-arcs in Section \ref{sect:110}, and we discuss the $60$-arcs in Section \ref{sect:60}.  Finally, since there are extensive calculations done in $\GAP$ \cite{GAP}, many with the aid of the package FinInG \cite{fining}, we detail the calculations done and programs used in Appendix \ref{sect:code}.

\section{A 5-dimensional irreducible representation of PSL(2,11)}
\label{sect:5dimrep}

From the Atlas \cite{atlas}, the group $\PSL(2,11)$ has a $5$-dimensional irreducible representation over $\C$, and generators for this representation in $\GAP$ \cite{GAP} can be found at \cite{onlineatlas}; see Remark \ref{rem:GAPPSL}.  Indeed, if $\xi$ is a primitive $11^{\text{th}}$ root of unity, $c := \xi + \xi^3 +  \xi^4 + \xi^5 + \xi^9 = (-1 + i\sqrt{11})/2$, and
\[ A := \begin{pmatrix}
0&1&0&0&0\\
1&0&0&0&0\\
0&0&0&0&1\\
1&-1&c&1&-c\\
0&0&1&0&0
\end{pmatrix}, 
B := \begin{pmatrix}
0&0&0&1&0\\
0&0&1&0&0\\
0&-1&-1&0&0\\
c+1&0&0&-1&c+2\\
1&0&0&-1&1
\end{pmatrix},\]
then $\langle A, B \rangle \cong \PSL(2,11)$.  This irreducible representation of $\PSL(2,11)$ may be viewed as a matrix representation where each matrix has coefficients in $\Z[c]$, where $c$ (as defined above) is a root of the irreducible polynomial $c^2 + c + 3$.  We could also view these as matrices in $\Z[\xi]$.  Indeed, by \cite[Table 8.19]{BHRD}, we see that $\PSL(2,11)$ is a maximal subgroup of $\PGL(5,q)$ whenever $q^5 \equiv 1 \pmod {11}$, $q \neq 3$.  (The group $\PGL(5,3)$ contains a maximal subgroup isomorphic to the Mathieu group $M_{11}$, which contains a subgroup isomorphic to $\PSL(2,11)$.)   Finally, we note that while there exists another $5$-dimensional irreducible representation of $\PSL(2,11)$ over $\Q[\xi]$, the outcomes of the calculations done later in this paper (e.g., for which primes a $k$-arc exists) are the same if this other representation of $\PSL(2,11)$ is chosen.  These additional calculations are not included in this paper for the sake of brevity.      

\section{Verifying that a set of projective points is an arc}
\label{sect:verify}

The purpose of this section is to provide a computationally effective method for determining whether the reduction modulo $p$ of a set of points $\calP$ contained in $n$-dimensional projective space over a number field $\calQ(\xi)$ to $\PG(n,q)$, where $q$ is some power of $p$, is an arc.

The lemma that follows gives a sufficient condition for a set of points to be a $k$-arc.  This condition is well-known, and its proof is omitted here.  

\begin{defi}
Given a field $\mathbb{K}$, a subset $J$ of size five of $\mathbb{K}^5$, and an ordering $\sigma = (v_1, v_2, v_3, v_4, v_5)$ of the elements of $J$, define $X_{J,\sigma}$ to be the matrix whose $i^{th}$ row is $v_i$, and define $[J]$ to be the set of all matrices $X_{J, \sigma}$ for a fixed subset $J$ of $\mathbb{K}^5$.  If $K := \{P_1, \dots, P_5\}$ are projective points in $\PG(4,\mathbb{K})$, define $[K]$ to be the set of all $X_{J, \sigma}$, where $P_i$ is associated to the linear subspace $\langle v_i \rangle$ of $\mathbb{K}^5$ and $J:= \{v_1, v_2, v_3, v_4, v_5\}$ with an ordering $\sigma = (v_1, v_2, v_3, v_4, v_5)$.
\end{defi}

\begin{lem}
\label{lem:verify}
 If $\calA$ is a set of $k$ projective points in $\PG(4,\bbK)$, where $\bbK$ is a field, and, for all subsets $K$ of $\calA$ of size five, $\det(X) \neq 0$ for some $X \in [K]$, then $\calA$ is a $k$-arc of $\PG(4,\bbK)$.
\end{lem}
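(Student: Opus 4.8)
The plan is to reduce the geometric condition of five points lying in a hyperplane to the purely linear-algebraic condition that the corresponding vectors have vanishing determinant, and then argue directly. First I would unwind the definitions: in $\PG(4,\bbK)$ we have $n=4$, so a hyperplane is a projective subspace of dimension $n-1 = 3$, which corresponds to a $4$-dimensional linear subspace $W$ of $\bbK^5$. By definition, $\calA$ fails to be a $k$-arc precisely when some subset $K = \{P_1, \dots, P_5\}$ of five of its points is contained in such a hyperplane, so it suffices to show that the determinant hypothesis rules this out for every choice of five points.

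The key observation is a standard dictionary between the geometry and the linear algebra. Writing $P_i = \langle v_i \rangle$ for nonzero vectors $v_i \in \bbK^5$, the five points lie in a common hyperplane if and only if the vectors $v_1, \dots, v_5$ are linearly dependent. Indeed, if $P_1, \dots, P_5$ all lie in a $4$-dimensional subspace $W$, then the $v_i$ lie in $W$, and five vectors in a $4$-dimensional space must be dependent; conversely, if the $v_i$ are dependent they span a subspace of dimension at most $4$, which is contained in some $4$-dimensional subspace, i.e., in some hyperplane. In turn, the $v_i$ are linearly independent if and only if the $5 \times 5$ matrix $X_{J,\sigma}$ having them as rows satisfies $\det(X_{J,\sigma}) \neq 0$.

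The one point requiring care --- and the only genuine subtlety --- is that the hypothesis is phrased with ``$\det(X) \neq 0$ for some $X \in [K]$,'' so I would verify that this is insensitive to the choices built into $[K]$. Passing from one element of $[K]$ to another amounts to reordering the rows, which multiplies the determinant by $\pm 1$, and rescaling each row by a nonzero scalar, which multiplies the determinant by a nonzero factor, since a projective point determines its representative vector only up to scaling. Hence $\det(X) \neq 0$ for some $X \in [K]$ if and only if $\det(X) \neq 0$ for every $X \in [K]$, and either statement is equivalent to the linear independence of $v_1, \dots, v_5$ established above.

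Combining these steps gives the result directly: assuming the hypothesis, fix any five points $K \subseteq \calA$; some $X \in [K]$ has $\det(X) \neq 0$, so the corresponding vectors are linearly independent, hence span $\bbK^5$ and therefore lie in no $4$-dimensional subspace, so $K$ is not contained in any hyperplane. As $K$ was an arbitrary five-element subset, no five points of $\calA$ lie in a hyperplane, which is exactly the definition of $\calA$ being a $k$-arc. Since every step is elementary linear algebra, I do not anticipate a real obstacle; the only work is in correctly translating the projective-geometric hypothesis into the determinant condition and confirming that this translation is well defined under scaling of representatives and reordering.
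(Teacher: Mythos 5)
Your proof is correct. The paper explicitly omits a proof of this lemma, stating only that the condition is well-known, and your argument --- the standard dictionary between five points of $\PG(4,\bbK)$ lying in a common hyperplane, linear dependence of representative vectors in $\bbK^5$, and vanishing of the $5\times 5$ determinant, together with the check that passing between elements of $[K]$ (row reorderings and nonzero rescalings of representatives) changes the determinant only by a nonzero factor --- is precisely the intended one, with the scaling subtlety handled correctly.
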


It is easy to see that an analogous result holds for $\PG(n,\bbK)$, where $n \neq 4$.  Moreover, Lemma \ref{lem:verify} provides us with the following method to find an arc in $\PG(4,q)$.  Assume that we find a $k$-arc $\calP$ in $\PG(4, \Q(\xi))$, where $\Q(\xi)$ is a number field.  Lemma \ref{lem:verify} implies that, given any subset $K$ of $\calP$ of size five, we know that $\det(X) \neq 0$ for some $X \in [K]$.  Roughly speaking, if each of these determinants is nonzero in the reduction modulo $p$, then the image of $\calP$ will be a $k$-arc in the reduction modulo $p$.  In fact, there can only be a finite number of primes $p$ for which the reduction of $\calP$ modulo $p$ fails to be a $k$-arc.  The remainder of this section is dedicated to first formalizing this idea (Proposition \ref{prop:verify}) and then finding an effective method for computing a finite list of primes for which there may fail to be a $k$-arc in the reduction (Lemma \ref{lem:badprimes}). 

Henceforth in this section, let $f(x)$ be an irreducible polynomial over $\Z$, and let $R := \Z[\zeta]$, where $\zeta$ is a root of $f$. By abuse of notation, we identify $R$ with the field $\Z[x]/\langle f(x) \rangle$.  Let $\phi_{f(x)}$ be the homomorphism
\[ \phi_{f(x)}: \Z[x] \rightarrow \Z[x]/\langle f(x) \rangle \cong R.\]
For a prime $p$ such that $f(x)$ is still irreducible modulo $p$, let $\phi_{p}$ be the homomorphism \[ \phi_{p}: R \cong \Z[x]/\langle f(x) \rangle  \rightarrow \Z/p\Z[x]/\langle f(x)\rangle.\] Thus
\[ \phi_{f(x)}\phi_p : \Z[x] \rightarrow \Z/p\Z[x]/\langle f(x)\rangle,\]
and, since $\Ker(\phi_{f(x)}\phi_p) = \langle p, f(x) \rangle$ is a maximal ideal of $\Z[x]$, the image of $R$ under $\phi_p$ is a field, and we let $\GF(q)$ be the finite field $R^{\phi_{p}}$.   

In the following proposition, note that $\calP$ corresponds to a $k$-arc in $\PG(4, \calQ(\zeta))$, and so what we are really doing is verifying that, for infinitely many primes $p$, the reduction of this $k$-arc modulo $p$ is still a $k$-arc in the associated finite projective space.

\begin{prop}
\label{prop:verify}
 Let $\calP$ be a subset of $R^5$ of size $k$, and suppose for each subset $J$ of $\calP$ of size five that $\det(X) \neq 0$ for some $X \in [J]$.  Let $\mathcal{Q}$ be an infinite set of prime powers such that, if $q \in \mathcal{Q}$, then $\GF(q) \cong R^{\phi_{p}}$ for some prime $p$.   Then, there exist infinitely many $q \in \mathcal{Q}$ such that $\calP_p := \calP^{\phi_{p}}$ is a $k$-arc in $\PG(4, q)$. 
\end{prop}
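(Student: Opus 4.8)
The plan is to exploit the fact that, for each five-element subset $J$ of $\calP$, the chosen determinant is a \emph{fixed} nonzero element of the ring $R$, and that a fixed nonzero element of $R$ can reduce to zero modulo only finitely many primes. First I would fix, for every subset $J$ of $\calP$ of size five, a matrix $X_J \in [J]$ with $\det(X_J) \neq 0$; such a matrix exists by hypothesis, and since $\calP$ has size $k$ there are only $\binom{k}{5}$ such subsets, so there are only finitely many determinants $\det(X_J) \in R$ to control. Because $\phi_p \colon R \to R^{\phi_p} = \GF(q)$ is a ring homomorphism and the determinant is a polynomial in the matrix entries, we have $\phi_p(\det(X_J)) = \det(X_J^{\phi_p})$, where $X_J^{\phi_p}$ denotes the entrywise reduction. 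Consequently, by Lemma~\ref{lem:verify} applied in $\PG(4,q)$, the reduced set $\calP_p$ is a $k$-arc as soon as $\det(X_J^{\phi_p}) \neq 0$ for every five-element subset $J$, i.e.\ as soon as no $\det(X_J)$ lies in $\Ker \phi_p$.

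The heart of the argument is to show that, for each fixed $J$, only finitely many primes $p$ satisfy $\det(X_J) \in \Ker \phi_p$. Here I would use that $\Ker \phi_p = pR$, together with the integral basis $\{1, \zeta, \dots, \zeta^{d-1}\}$ of $R$ over $\Z$, where $d = \deg f$: writing $\det(X_J) = \sum_{i=0}^{d-1} a_i^{(J)} \zeta^i$ with $a_i^{(J)} \in \Z$, an element lies in $pR$ precisely when $p$ divides every coordinate $a_i^{(J)}$. Since $\det(X_J) \neq 0$, the integer $g_J := \gcd(a_0^{(J)}, \dots, a_{d-1}^{(J)})$ is nonzero, and $\det(X_J) \in pR$ forces $p \mid g_J$. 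Thus the only primes for which the reduction of this particular determinant can vanish are the finitely many prime divisors of $g_J$.

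Finally I would collect the bad primes: the set $S := \bigcup_J \{p : p \mid g_J\}$ is a finite union of finite sets, hence finite (equivalently, every bad prime divides the nonzero integer $\prod_J g_J$). Now each $q \in \mathcal{Q}$ satisfies $\GF(q) \cong R^{\phi_p}$ for the prime $p$ with $q = p^d$, so the assignment sending $q$ to its underlying prime $p$ is injective; since $\mathcal{Q}$ is infinite, the associated set of primes is infinite, and deleting the finite set $S$ leaves infinitely many primes $p$, each giving a distinct $q \in \mathcal{Q}$, with no $\det(X_J)$ in $pR$. For each such $p$ the determinant criterion above shows that $\calP_p$ is a $k$-arc in $\PG(4,q)$, yielding infinitely many $q \in \mathcal{Q}$ as required.

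I expect the main obstacle to be the middle step, namely correctly identifying $\Ker \phi_p$ with the ideal $pR$ and translating membership in $pR$ into the elementary divisibility condition that $p$ divides every $\Z$-coordinate of $\det(X_J)$ relative to the integral basis; once this translation is in place, the finiteness of the exceptional primes, and hence the whole proposition, follows immediately.
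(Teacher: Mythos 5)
Your proof is correct, and its skeleton matches the paper's: finitely many five-element subsets, a nonzero determinant for each, and the fact that a fixed nonzero element of $R$ can reduce to zero under $\phi_p$ for only finitely many primes, so that all but finitely many admissible primes yield a $k$-arc. The genuine difference is in how that key finiteness claim is handled. The paper's proof argues contrapositively and simply \emph{asserts} the claim (pausing to note that matrices in $[J]$ differ by row permutations, so their determinants agree up to sign); an effective justification appears only later, in Lemma~\ref{lem:badprimes}, via the criterion that $p$ must divide the constant term of the integer-normalized minimal polynomial of the determinant. You instead prove the finiteness on the spot: $\Ker \phi_p = pR$, and since $R \cong \Z[x]/\langle f(x) \rangle$ is free over $\Z$ on the power basis, membership of $\det(X_J)$ in $pR$ is equivalent to $p$ dividing the gcd $g_J$ of its integer coordinates. (One caveat: freeness on the power basis uses $f$ monic, which holds in the paper's cyclotomic applications but is not among the section's stated hypotheses on $f$, so you should flag it.) Your route buys two things: the criterion is exact rather than one-sided --- the primes at which the subset $J$ degenerates are \emph{precisely} the prime divisors of $g_J$, whereas the constant-term test of Lemma~\ref{lem:badprimes} yields only a superset of candidate bad primes --- and, because Lemma~\ref{lem:verify} requires a nonzero determinant for just one matrix in each $[J]$, your direct formulation dispenses with the paper's row-permutation sign observation entirely. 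One point both treatments gloss over equally: for $k \ge 5$, the nonvanishing of the reduced determinants also forces the images of the vectors in $\calP$ to be nonzero and pairwise non-proportional (a zero or proportional pair of rows kills every determinant through them), so $\calP_p$ genuinely consists of $k$ projective points; this deserves a sentence in either write-up.
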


\begin{proof}
 Assume that $p$ is a prime such that $\calP_p$ is not a $k$-arc of $\PG(4, R^{\phi_{p}}) = \PG(4,q)$.  This implies that there is a set of five projective points $K = \{P_1, P_2, P_3, P_4, P_5\}$ of $\calP_p$ contained in a hyperplane, which implies that $\det(X) = 0$ for every matrix $X \in [K]$.  Let each $P_i$ correspond to the vector $v_i \in \calP$ and $J = \{v_1, v_2, v_3, v_4, v_5\}$.  Since $\det(Y) \neq 0$ for some $Y \in [J]$ by assumption and the matrices of $[J]$ differ by elementary row operations, this implies that no matrix in $[J]$ has zero determinant in $R$.  Hence, for some matrix $Y \in [J]$, $\det(Y)$ is nonzero in $R$, but the corresponding determinant is zero in $\GF(q)$. 
 
 On the other hand, there are only finitely many subsets of size five of $\calP$, namely ${k \choose 5}$.  For each subset $J$ of size five, there are only finitely many different matrices in $[J]$, which means there are only finitely many different values of $\det(X)$, where $X \in [J]$.  Since each such determinant is nonzero, for each $X \in [J]$, there are only finitely many different primes $p$ such that $\det(X^{\phi_{p}}) = 0$ in $\GF(q)$, where $q$ is a power of $p$.  Therefore, there are only finitely many primes such that $\calP_p$ is not a $k$-arc in $\PG(4,q)$.  The result follows. 
\end{proof}

More practically, we want to be able to determine in an efficient manner precisely which primes will yield a $k$-arc.  We first define notation that will be useful.

\begin{defi}
Let $\calP$ be a subset of $R^5$ of size $k$, and suppose for each subset $J$ of $\calP$ of size five that $\det(X) \neq 0$ for some $X \in [J]$.  Let $\mathcal{X}_\calP$ be a set of representatives of $[J]$ as $J$ runs over the subsets of size five of $\calP$. Let $\mathcal{D}_\calP := \{\det(X) : X \in \mathcal{X}_\calP\}$, let $\mathcal{E}_\calP$ be set of all minimal polynomials of the elements of $\mathcal{D}_\calP$ over $\Q$, and let $\mathcal{F}_\calP$ be the set of all polynomials in $\mathcal{E}_\calP$ rationalized such that all coefficients are in $\Z$ (and the coefficients of polynomials in $\mathcal{F}_\calP$ have greatest common divisor $1$). We define $\mathcal{C}_\calP$ to be the set of prime factors of the constant terms of the polynomials in $\mathcal{F}_\calP$. 
\end{defi}

The following lemma provides a method for finding every prime such that $\mathcal{P}^{\phi_p}$ is not a $k$-arc.  

\begin{lem}
\label{lem:badprimes}
 If $p$ is a prime such that $f(x)$ is irreducible modulo $p$ and $p \not\in \mathcal{C}_\calP$, then $\calP_p := \calP^{\phi_{p}}$ is a $k$-arc in $\PG(4, q)$, where $\GF(q) \cong R^{\phi_{p}}$.   
\end{lem}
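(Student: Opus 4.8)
The plan is to reduce everything to a single pointwise statement about the determinants and then invoke Lemma~\ref{lem:verify}. First I would observe that, because the determinant is an integer polynomial in the matrix entries, the reduction $\phi_p$ commutes with it: for any matrix $X$ over $R$ one has $\det(X^{\phi_p}) = (\det X)^{\phi_p}$. Thus, to verify that $\calP_p$ is a $k$-arc, it suffices to show that each determinant $d \in \mathcal{D}_\calP$ satisfies $d^{\phi_p} \neq 0$ in $\GF(q)$: given any five-element subset $K$ of $\calP_p$, the representative matrix $X \in \mathcal{X}_\calP$ of the corresponding subset $J \subseteq \calP$ reduces to a matrix $X^{\phi_p} \in [K]$ whose determinant is $d^{\phi_p} \neq 0$, so Lemma~\ref{lem:verify} applies and $\calP_p$ is a $k$-arc of $\PG(4,q)$.

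The heart of the argument is therefore to prove $d^{\phi_p} \neq 0$ for each $d \in \mathcal{D}_\calP$, and here I would use the defining relation of the minimal polynomial. Fix $d$, which is nonzero in $R$ by hypothesis, and let $g(x) = c_n x^n + \cdots + c_1 x + c_0 \in \mathcal{F}_\calP$ be the integer rationalization of the minimal polynomial of $d$ over $\Q$, so that $g(d) = 0$ in $R$. Applying $\phi_p$ to this relation yields an equation in $\GF(q)$ in which, were $d^{\phi_p}$ equal to $0$, every term but the constant would vanish, forcing $c_0 \equiv 0 \pmod p$. Since $p \notin \mathcal{C}_\calP$ excludes precisely the prime divisors of the constant terms of the polynomials in $\mathcal{F}_\calP$, this is impossible, and the resulting contradiction gives $d^{\phi_p} \neq 0$.

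The one genuinely necessary check --- and the step I would flag as the main obstacle, though it is a mild one --- is verifying that the constant term $c_0$ is nonzero in the first place; otherwise the argument above is vacuous. This follows from irreducibility: the minimal polynomial of $d$ over $\Q$ equals $x$ only when $d = 0$, so for $d \neq 0$ it has nonzero constant term, and rationalizing to obtain $g \in \mathcal{F}_\calP$ preserves this. I would also note in passing that the determinant condition automatically guarantees that $\calP_p$ genuinely consists of $k$ distinct projective points: if two reduced points coincided, then any five-element subset containing both would force every matrix in the corresponding $[K]$ to be singular, contradicting what has just been shown.
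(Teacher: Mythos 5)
Your proof is correct and takes essentially the same route as the paper's: both hinge on applying $\phi_{p}$ to the relation $g_d(d)=0$ for the integer-rationalized minimal polynomial of a determinant $d$, so that $d^{\phi_{p}}=0$ would force $p$ to divide the constant term and hence $p \in \mathcal{C}_\calP$. The only difference is cosmetic---the paper argues by contrapositive while you argue directly---and your explicit verifications that the constant term is nonzero (via irreducibility and $d \neq 0$) and that the reduced points remain distinct are worthwhile details that the paper leaves implicit.
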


\begin{proof}
We first make an observation.  Fix $J \subset \calP$, $|J| = 5$, and let $X$, $Y \in [J]$.  Since $X$ and $Y$ differ only in the order in which the rows appear in each matrix, $\det(Y) = \pm \det(X)$, and so $\det(X)^{\phi_{p}} = 0$ if and only if $\det(Y)^{\phi_{p}} = 0$.  It thus suffices to consider a representative of each class $[J]$.

Assume that $p$ is a prime but that $\calP_p := \calP^{\phi_{p}}$ is not a $k$-arc in $\PG(4, q)$, where $\GF(q) \cong R^{\phi_{p}}$.  This means that there exists some subset $J$ of $\calP$ such that $\det(X)^{\phi_{p}} = 0$ for all $X \in [J]$.  Let $d := \det(X)$ for some $X \in [J]$, and define $f_d(x)$ to be the minimal polynomial of $d$ over $\Q$.  There exists $N \in \N$ such that $g_d(x) := N \cdot f_d(x) \in \Z[x]$ and the coefficients of $g_d(x)$ have greatest common divisor $1$.  Since $f_d(d) = 0$ in $\Q$, $g_d(d)^{\phi_p} = 0$ in $\GF(q)$.  On the other hand $d^{\phi_{p}} = 0$ in $\GF(q)$, and so
\[g_d(d)^{\phi_{p}} = c_d^{\phi_{p}},\] where $c_d$ is the constant term of $g_d(x)$, which implies that $p \mid c_d$, i.e., $p \in \mathcal{C}_\calP$.  The result follows.
\end{proof}

Indeed, in practice the hypotheses of Lemma \ref{lem:badprimes} can be checked relatively quickly in $\GAP$ for $k$-arcs of moderate size.

\section{Transitive PSL(2,11)-invariant 110-arcs in PG(4,q)}
\label{sect:110}
\subsection{Existence}

We recall from Section \ref{sect:5dimrep} that there is a $5$-dimensional representation of $\PSL(2,11)$ over $\Z[\xi]$ with generators 
\[ A := \begin{pmatrix}
0&1&0&0&0\\
1&0&0&0&0\\
0&0&0&0&1\\
1&-1&c&1&-c\\
0&0&1&0&0
\end{pmatrix}, 
B := \begin{pmatrix}
0&0&0&1&0\\
0&0&1&0&0\\
0&-1&-1&0&0\\
c+1&0&0&-1&c+2\\
1&0&0&-1&1
\end{pmatrix},\]
where $c = \xi + \xi^3 +  \xi^4 + \xi^5 + \xi^9 = (-1 + i\sqrt{11})/2$ for a primitive $11^\text{th}$ root of unity $\xi$. 

By \cite{onlineatlas}, $M := ABABABABB$ has order $6$ in $G:= \langle A, B \rangle \cong \PSL(2,11)$.  Suppose $v$ is an eigenvector of $M$.  This means $vM \in \langle v \rangle$, and so $\langle v \rangle$ is fixed by $H:= \langle M \rangle$.  If $T_1, \dots T_{110}$ are representatives of the distinct cosets of $\langle M \rangle$, then we will see that the $110$-arc we are looking for comes from the projective points associated to the linear subspaces $\langle vT_1 \rangle, \dots, \langle vT_{110}\rangle$.  Note that, in order to get a suitable eigenvector $v$, there need eventually to exist third roots of unity in the finite field, and so we work in $\Z[\xi_{33}]$, where $\xi_{33}$ is a primitive $33^\text{rd}$ root of unity.

\begin{prop}
\label{prop:110set}
Let $G = \langle A, B \rangle$ and $H: = \langle M \rangle$, as above.  Let $T_1 = I, \dots, T_{110}$ be a set of representatives of the distinct cosets of $H$ in $G$.  Then, there exists an eigenvector $v$ of $H$ such that any subset $J$ of size five of the set $\calP := \{v, vT_2, \dots, vT_{110}\}$ has the property that $\det(X) \neq 0$ for some $X \in [J]$. 
\end{prop}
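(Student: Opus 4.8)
The plan is to produce one explicit eigenvector $v$ and then verify the determinant condition of Lemma \ref{lem:verify} for $\calP$, using the action of $G$ on $\calP$ to cut the verification down to a feasible size. First I would form the matrix $M = ABABABABB$ explicitly and compute its eigenvalues. Since $M$ has order $6$, these are $6^{\text{th}}$ roots of unity, and because the primitive ones require a cube root of unity I work over $\Q(\xi_{33})$, which contains all $6^{\text{th}}$ roots of unity together with the entries of $A$ and $B$ (which lie in $\Z[c] \subseteq \Z[\xi_{33}]$). Choosing an eigenvalue $\lambda$ and a corresponding eigenvector $v$, and clearing denominators so that $v \in \Z[\xi_{33}]^5$, we have $vM = \lambda v$, so $\langle v \rangle$ is fixed by $H = \langle M \rangle$ and therefore $\mathrm{Stab}_G(\langle v \rangle) \supseteq H$.

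Next I would confirm that this containment is an equality, equivalently that the $110$ projective points $\langle v T_1 \rangle, \dots, \langle v T_{110} \rangle$ are pairwise distinct. This is a short computation over the $\binom{110}{2}$ pairs, or equivalently a check that the $G$-orbit of $\langle v \rangle$ has size $660/6 = 110$. Establishing it guarantees that $\calP$ really does determine $110$ distinct points, so that no subset $J$ is forced to be degenerate simply because two of its vectors are scalar multiples of one another; it is at this stage that one selects the eigenvalue $\lambda$ for which everything works.

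The substance of the proof is checking that for every $5$-subset $J$ of $\calP$ some $X \in [J]$ has $\det(X) \neq 0$; by Lemma \ref{lem:verify} this shows $\calP$ is a $110$-arc over $\Q(\xi_{33})$, which is precisely the hypothesis needed to feed into Proposition \ref{prop:verify} and Lemma \ref{lem:badprimes}. Testing all $\binom{110}{5} = 122{,}391{,}522$ subsets directly is wasteful. Instead I exploit that $G \le \GL(5,\Q(\xi_{33}))$ permutes the points of $\calP$ and that, for $g \in G$ and a matrix $X$ whose rows are five of the vectors, $\det(Xg) = \det(X)\det(g)$ with $\det(g) \neq 0$; hence linear independence of a $5$-subset is a $G$-invariant property, and the condition of Lemma \ref{lem:verify} is constant on each $G$-orbit of $5$-subsets. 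It therefore suffices to examine one representative from each orbit. Since the action of $G$ on the $110$ points is just the coset action of $\PSL(2,11)$ on $H$, these orbits can be enumerated as a purely permutation-theoretic computation in $\GAP$, bringing the number of determinants to be inspected down to roughly $\binom{110}{5}/660$, on the order of $10^5$.

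For each orbit representative $J$ I would then evaluate $\det(X)$ for one $X \in [J]$ as an element of $\Z[\xi_{33}]$ and check that it is nonzero; to avoid heavy arithmetic in the number field one may instead certify nonvanishing by mapping $\Z[\xi_{33}]$ onto a residue field $\GF(q_0)$ at a conveniently chosen prime and verifying the images are nonzero, since a ring homomorphism sends a zero determinant to zero. The main obstacle is therefore not conceptual but one of computational scale and exactness: even after the symmetry reduction there remain on the order of $10^5$ determinants to compute and certify. I expect the verification to succeed because the representation is irreducible and the configuration is \emph{generic} enough that accidental coplanarities among the $110$ points are rare, and the group-theoretic reduction is exactly what makes the remaining finite check tractable in $\GAP$.
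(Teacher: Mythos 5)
Your proposal is correct and is in substance the same proof as the paper's: the paper proves Proposition \ref{prop:110set} by direct computation (Calculation \ref{calc:110}), choosing the eigenvector of $M = ABABABABB$ with eigenvalue a primitive cube root of unity over $\Q(\xi_{33})$, translating it by the $110$ coset representatives, and verifying that no $5$-subset is degenerate, exploiting the $G$-invariance of linear dependence to shrink the check. The two places where you deviate are tactical, and the trade-offs are worth recording. For the symmetry reduction, the paper uses a weaker consequence of transitivity than your full orbit decomposition: any dependent $5$-subset can be translated to one containing the first point, so it suffices to test the $\binom{109}{4} = 5{,}563{,}251$ subsets through that point; this needs no orbit computation at all, whereas your plan to enumerate $G$-orbits on all $\binom{110}{5} = 122{,}391{,}522$ subsets would cut the determinant count to roughly $1.9 \times 10^{5}$ but at the cost of a large permutation computation that is arguably harder than the determinants it saves. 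For certification, the paper evaluates every determinant exactly in the cyclotomic field (hence the $6.8$ GB of RAM) rather than reducing modulo one convenient prime as you suggest; your shortcut is logically sound for this proposition (a ring homomorphism sends $0$ to $0$, so a nonzero image certifies a nonzero determinant), but the exact values are not wasted work in the paper --- they are precisely the input to Lemma \ref{lem:badprimes} via Calculation \ref{calc:goodprimes110}, whose minimal polynomials and constant terms produce the explicit lists of possibly bad primes underlying Theorems \ref{thm:110prime} and \ref{thm:110prime2}, data your modular certificate would not supply. Finally, your separate check that the stabilizer of $\langle v \rangle$ is exactly $H$ (equivalently, that the $110$ points are distinct) is harmless but redundant: proportional vectors would force vanishing determinants among subsets already being tested, so distinctness is subsumed by the main verification.
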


\begin{proof}
 This follows by Calculation \ref{calc:110}.  We note that it took approximately 6.8 GB of RAM to complete this calculation.
\end{proof}

\begin{thm}
 There are infinitely many primes $p$ such that, if $\GF(q) = \Z[\xi_{33}]^{\phi_{p}}$, then there exists a $\PSL(2,11)$-transitive $110$-arc in $\PG(4,q)$.
\end{thm}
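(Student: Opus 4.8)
The plan is to combine Proposition \ref{prop:110set} with the machinery developed in Section \ref{sect:verify}, specializing to the number ring $R = \Z[\xi_{33}]$. First I would set $f(x)$ to be the minimal polynomial of $\xi_{33}$ over $\Q$ (a cyclotomic-type polynomial of degree $\varphi(33) = 20$), so that $R \cong \Z[x]/\langle f(x)\rangle$ fits the framework of the section. Proposition \ref{prop:110set} produces an eigenvector $v$ of $H$ and the set $\calP = \{v, vT_2, \dots, vT_{110}\} \subset R^5$ of size $110$ with the key property that every size-five subset $J$ has $\det(X) \neq 0$ for some $X \in [J]$. This is precisely the hypothesis required to invoke the results on reduction modulo $p$.

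Next I would identify an infinite set $\calQ$ of prime powers to which Proposition \ref{prop:verify} applies. The natural choice is $\calQ = \{q : q = p^h \text{ for a prime } p \text{ with } \GF(q) \cong R^{\phi_p}\}$; concretely these are the primes $p$ for which the reduction of $f(x)$ has an irreducible factor of the appropriate degree, equivalently the primes governed by the splitting behaviour dictated by the congruence $p^5 \equiv 1 \pmod{11}$ (together with the requirement that cube roots of unity be available, hence the passage to $\xi_{33}$). To see that $\calQ$ is infinite, I would appeal to Dirichlet's theorem on primes in arithmetic progressions: infinitely many primes satisfy the relevant congruence class modulo $33$, so infinitely many finite fields $R^{\phi_p}$ of the requisite form exist. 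With $\calP$ and $\calQ$ in hand, Proposition \ref{prop:verify} immediately yields infinitely many $q \in \calQ$ for which $\calP_p = \calP^{\phi_p}$ is a $110$-arc in $\PG(4,q)$.

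Finally, I would argue that this $110$-arc is genuinely $\PSL(2,11)$-transitive. The image $G^{\phi_p} \le \PGL(5,q)$ is isomorphic to $\PSL(2,11)$ for the primes under consideration (by the maximality statement cited from \cite[Table 8.19]{BHRD}), and by construction $\calP$ is exactly the orbit of the projective point $\langle v \rangle$ under a transversal of $H = \langle M \rangle$ in $G$. Since $\langle v \rangle$ is fixed by $H$, the point $\langle v \rangle$ has stabilizer containing $H$, and the $110$ cosets give $110$ distinct images $\langle vT_i \rangle$; as $|G|/|H| = 660/6 = 110$, the orbit has size exactly $110$ and $G$ permutes it transitively. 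Reducing modulo $p$ preserves this transitive action, so $\calP_p$ is a $\PSL(2,11)$-transitive $110$-arc.

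The main obstacle is the bookkeeping in the reduction step rather than any deep conceptual difficulty: one must ensure that reducing modulo $p$ does not collapse two distinct projective points of $\calP$ into one (which would lower the orbit size below $110$) and does not change the isomorphism type of $G^{\phi_p}$. Both failures can only occur for a finite set of primes — the point-collision failures are controlled by the same kind of determinant/constant-term analysis as in Lemma \ref{lem:badprimes} applied to the differences of the vectors $vT_i$, and the isomorphism-type failure is the exceptional $p = 3$ case already flagged after \cite[Table 8.19]{BHRD}. Since excluding finitely many primes from an infinite set leaves an infinite set, the conclusion stands; the explicit determination of exactly which primes are excluded is deferred to the computational results of Theorems \ref{thm:110prime} and \ref{thm:110prime2}.
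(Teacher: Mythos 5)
Your proposal is correct and takes essentially the same route as the paper: the paper's own proof is the one-line observation that the theorem follows from Proposition \ref{prop:110set} together with Proposition \ref{prop:verify}, which is exactly your argument with the supporting details (the choice $R = \Z[\xi_{33}]$, the infinitude of suitable primes, and transitivity of the reduced orbit) spelled out. One small simplification: your worry about reduction collapsing two points of $\calP$ is already subsumed by the determinant condition in Proposition \ref{prop:verify} --- if two of the $110$ reduced points coincided, every five-element subset containing both would have vanishing determinant --- so no separate analysis of differences of the vectors $vT_i$ is needed.
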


\begin{proof}
 This follows from Proposition \ref{prop:110set} and Proposition \ref{prop:verify}.
\end{proof}

\subsection{Examples}

With the aid of $\GAP$ and the package FinInG \cite{fining}, we are able to calculate specific primes and prime powers $q$ such that there is a $\PSL(2,11)$-transitive $110$-arc in $\PG(4,q)$.  

Assume that $p$ is a prime and that there exists a faithful representation of $\PSL(2,11)$ over $\GF(p)$.  In this case, we need both primitive $11^\text{th}$ and primitive $3^\text{rd}$ roots of unity to exist in $\GF(p)$; hence, we assume that $p^5 \equiv 1 \pmod {11}$ and $p \equiv 1 \pmod 3$.  We are now ready to prove Theorem \ref{thm:110prime}.

\begin{proof}[Proof of Theorem \ref{thm:110prime}]
 That there are infinitely many such primes follows from the fact that $\GF(p) = \Z[\xi_{33}]^{\phi_{p}}$ whenever $p \equiv 1 \pmod {33}$ by the above discussion, Proposition \ref{prop:110set}, and Proposition \ref{prop:verify}.  
 
 By Lemma \ref{lem:badprimes} and Calculation \ref{calc:goodprimes110}, the possible primes $p$ for which $\PG(4,p)$ does not contain a $110$-arc are explicitly known.  These calculations have further been verified for all primes $p$ such that $p^5 \equiv 1 \pmod {33}$ and $p<65000$ using Functions \ref{func:110arcp} and \ref{func:110arcsp}.  (Function \ref{func:110arcp} is called by Function \ref{func:110arcsp}, and Function \ref{func:span} is called by Function \ref{func:110arcp}.)  Moreover, we have verified all of these calculations for both irreducible $5$-dimensional representations of $\PSL(2,11)$ over $\C$, although these calculations are omitted since they are analogous to the ones listed.
\end{proof} 

There are many prime powers such that $\GF(p)$ contains a primitive $11^\text{th}$ root of unity but not a primitive $3^\text{rd}$ root of unity.  In this case, $p^5 \equiv 1 \pmod {11}$ and $p \equiv 2 \pmod 3$.  In this case, we need to adjoin a primitive third root of unity to $\GF(p)$, and so we examine $\GF(p^2)$.  We are now ready to prove Theorem \ref{thm:110prime2}.

\begin{proof}[Proof of Theorem \ref{thm:110prime2}]
 That there are infinitely many such primes follows from the fact that $\GF(p^2) = \Z[\xi_{33}]^{\phi_{p}}$ whenever $p^5 \equiv 23 \pmod {11}$ and $p$ by the above discussion, Proposition \ref{prop:110set}, and Proposition \ref{prop:verify}. 
 
 By Lemma \ref{lem:badprimes} and Calculation \ref{calc:goodprimes110}, the possible primes $p$ for which $\PG(4,p^2)$ does not contain a $110$-arc are explicitly known.  These calculations have further been verified for all primes $p$ such that $p^5 \equiv 23 \pmod {33}$ and $p<65000$ using Functions \ref{func:110arc2} and \ref{func:110arcs2}.  (Function \ref{func:110arc2} is called by Function \ref{func:110arcs2}, and Function \ref{func:span} is called by Function \ref{func:110arc2}.)  Moreover, we have verified all of these calculations for both irreducible $5$-dimensional representations of $\PSL(2,11)$ over $\C$, although these are omitted since they are analogous to the ones listed.
\end{proof}

\section{Transitive PSL(2,11)-invariant 60-arcs in PG(4,q)}
\label{sect:60}
\subsection{Existence}

We recall from Section \ref{sect:5dimrep} that there is a $5$-dimensional representation of $\PSL(2,11)$ over $\Z[\xi]$ with generators 
\[ A := \begin{pmatrix}
0&1&0&0&0\\
1&0&0&0&0\\
0&0&0&0&1\\
1&-1&c&1&-c\\
0&0&1&0&0
\end{pmatrix}, 
B := \begin{pmatrix}
0&0&0&1&0\\
0&0&1&0&0\\
0&-1&-1&0&0\\
c+1&0&0&-1&c+2\\
1&0&0&-1&1
\end{pmatrix},\]
where $c = \xi + \xi^3 +  \xi^4 + \xi^5 + \xi^9 = (-1 + i\sqrt{11})/2$ for a primitive $11^\text{th}$ root of unity $\xi$. 

By \cite{onlineatlas}, $M:= AB$ is an element of order $11$ in $\langle A,B \rangle \cong \PSL(2,11)$. Suppose $v$ is an eigenvector of $M$.  This means $vM \in \langle v \rangle$, and so $\langle v \rangle$ is fixed by $H:= \langle M \rangle$.  If $T_1, \dots T_{60}$ are representatives of the distinct cosets of $\langle M \rangle$, then we will see that the $60$-arc we are looking for comes from the projective points associated to the linear subspaces $\langle vT_1 \rangle, \dots, \langle vT_{60}\rangle$.  

\begin{prop}
\label{prop:60set}
Let $G = \langle A, B \rangle$ and $H: = \langle M \rangle$, as above.  Let $T_1 = I, \dots, T_{60}$ be a set of representatives of the distinct cosets of $H$ in $G$.  Then, there exists an eigenvector $v$ of $H$ such that any subset $J$ of size five of the set $\calP := \{v, vT_2, \dots, vT_{60}\}$ has the property that $\det(X) \neq 0$ for some $X \in [J]$. 
\end{prop}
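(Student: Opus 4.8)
The plan is to reduce the statement, exactly as in Proposition~\ref{prop:110set}, to a finite $\GAP$ verification via Lemma~\ref{lem:verify}, but to organize that verification around the $\PSL(2,11)$-action so as to avoid naively testing all $\binom{60}{5} = 5{,}461{,}512$ five-element subsets.

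First I would exhibit an explicit eigenvector. Since $M = AB$ has order $11$, its minimal polynomial divides $x^{11} - 1$, which splits with distinct roots over $\Q(\xi)$; hence $M$ is diagonalizable with eigenvalues among the primitive $11^{\text{th}}$ roots of unity, and I may fix a nonzero eigenvector $v \in \Z[\xi]^5$ in one of its eigenspaces. (No third roots of unity are needed here, in contrast to the $110$-arc case, because $M$ already has order $11$.) I would then choose a transversal $T_1 = I, T_2, \dots, T_{60}$ of $H = \langle M \rangle$ in $G$ and form $\calP = \{v, vT_2, \dots, vT_{60}\}$. Because $vh = \mu_h v$ for a scalar $\mu_h$ for every $h \in H$, the projective point $\langle vT_i \rangle$ is independent of the chosen representative, so $\calP$ determines at most $60$ projective points; one checks these are pairwise distinct, i.e. that the stabilizer in $G$ of $\langle v \rangle$ is exactly $H$.

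By Lemma~\ref{lem:verify} it then suffices to show that every five-element subset $J \subseteq \calP$ admits some $X \in [J]$ with $\det(X) \neq 0$. The crucial point that makes this tractable is that this property is constant on $G$-orbits of five-element subsets: if $g \in G$ and $X$ has rows $vT_{i_1}, \dots, vT_{i_5}$, then each row $vT_{i_j}g$ equals a nonzero scalar times $vT_{\sigma(i_j)}$, where $\sigma$ is the permutation induced by right multiplication by $g$ on the cosets of $H$, so the rows of $Xg$ are nonzero rescalings of the rows of a matrix in $[J^g]$. Since $\det(Xg) = \det(X)\det(g)$ with $\det(g) \neq 0$, a nonzero-determinant representative exists for $J$ if and only if one exists for $J^g$. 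I would therefore compute the orbits of $G$ acting on the $5$-subsets of the $60$-point set, pick one representative $J$ from each orbit, and evaluate a single $\det(X)$, $X \in [J]$, over $\Z[\xi]$ in each case. As $G$ is transitive on the $60$ points, the number of orbits on $5$-subsets is vastly smaller than $\binom{60}{5}$, collapsing the problem to a feasible number of determinant computations.

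I expect the main obstacle to be computational rather than conceptual: performing the exact determinant arithmetic over the cyclotomic ring $\Z[\xi]$ for each orbit representative efficiently, and confirming that the particular eigenvector chosen does not accidentally force five points into a common hyperplane. If a given eigenspace fails, the remedy is to try another eigenvalue of $M$; the existential nature of the statement means only one workable choice is required, and the Galois symmetry permuting the eigenvalues of $M$ strongly suggests that the choices are essentially interchangeable.
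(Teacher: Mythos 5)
Your proposal is correct and takes essentially the same approach as the paper: the paper proves Proposition~\ref{prop:60set} by fixing an eigenvector of $M = AB$ over $\Q(\xi_{11})$ (no third roots of unity needed, exactly as you note) and running a finite exact-determinant verification in \GAP{} (Calculation~\ref{calc:60}), using precisely your orbit-invariance observation in its simplest form --- by transitivity of $G$ on the $60$ points, any dependent $5$-set can be translated to one containing the first point, so only the $\binom{59}{4} = 455126$ subsets containing $v$ are checked. Your finer reduction to full $G$-orbit representatives on $5$-subsets would shrink the number of determinant evaluations further, but it is a refinement of the same symmetry argument rather than a different route.
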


\begin{proof}
 This follows by Calculation \ref{calc:60}.
\end{proof}

\begin{thm}
 There are infinitely many primes $p$ such that, if $\GF(q) = \Z[\xi]^{\phi_{p}}$, then there exists a $\PSL(2,11)$-transitive $60$-arc in $\PG(4,q)$.
\end{thm}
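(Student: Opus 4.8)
The plan is to reduce the theorem to the two pieces of machinery already established, combining the characteristic-zero construction of Proposition \ref{prop:60set} with the reduction principle of Proposition \ref{prop:verify}, in exact parallel with the proof of the corresponding $110$-arc statement. First I would set $R = \Z[\xi]$, where $\xi$ is a primitive $11^{\text{th}}$ root of unity with minimal polynomial $f(x) = \Phi_{11}(x)$ over $\Q$, and take $\calP = \{v, vT_2, \dots, vT_{60}\}$ to be the set of vectors furnished by Proposition \ref{prop:60set}. That proposition (through Calculation \ref{calc:60}) supplies precisely the hypothesis needed downstream: for every five-element subset $J \subseteq \calP$ there is some $X \in [J]$ with $\det(X) \neq 0$. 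By Lemma \ref{lem:verify}, the $60$ projective points $\langle v \rangle, \langle vT_2 \rangle, \dots, \langle vT_{60}\rangle$ thus form a $60$-arc in $\PG(4, \Q(\xi))$.

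Next I would record why this arc is $\PSL(2,11)$-transitive on $60$ distinct points. Since $v$ is an eigenvector of $M = AB$, the line $\langle v \rangle$ is fixed by $H = \langle M \rangle$, so the $G$-stabilizer of $\langle v \rangle$ contains $H$; as the $T_i$ range over coset representatives of $H$ in $G$ and $[G:H] = 660/11 = 60$, the lines $\langle vT_i \rangle$ constitute the full $G$-orbit of $\langle v \rangle$, on which $G \cong \PSL(2,11)$ acts transitively. The arc property from the previous step forces these $60$ points to be pairwise distinct, whence the stabilizer is exactly $H$ and $\calP$ is a genuinely $\PSL(2,11)$-transitive $60$-arc.

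Then I would invoke Proposition \ref{prop:verify} for this $R$ and $\calP$, taking $\mathcal{Q}$ to be the set of prime powers $q$ with $\GF(q) \cong R^{\phi_p}$ for some prime $p$. Because each $\phi_p$ is a ring homomorphism realized on the matrix entries of $A$ and $B$, the reduced $60$ points remain a single orbit of the reduced group, which stays isomorphic to $\PSL(2,11)$ for these $p$ (indeed $q \equiv 1 \pmod{11}$ and $q \neq 3$, so the representation is faithful), and hence the transitivity descends. Proposition \ref{prop:verify} then produces infinitely many $q \in \mathcal{Q}$ for which $\calP_p := \calP^{\phi_p}$ is a $60$-arc in $\PG(4,q)$. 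To see that $\mathcal{Q}$ is infinite, I would note that $\Phi_{11}$ is irreducible modulo $p$ exactly when $p$ is a primitive root modulo $11$, a condition depending only on $p \bmod 11$ and satisfied by $\varphi(10)$ of the $10$ admissible residue classes, so Dirichlet's theorem supplies infinitely many such $p$.

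The main obstacle is not this descent, which is formal, but the characteristic-zero input it rests on: Proposition \ref{prop:60set} must certify a nonvanishing representative determinant for each of the $\binom{60}{5}$ quintuples of points, which is the substantive computational content handled in Calculation \ref{calc:60}. A secondary point I would treat carefully is the precise meaning of $\mathcal{Q}$ and of $R^{\phi_p}$: the strict formulation requiring $f$ irreducible modulo $p$ only yields arcs in the large fields $\GF(p^{10})$, whereas the sharper examples in $\PG(4,p)$ with $p \equiv 1 \pmod{11}$ arise by reducing modulo a degree-one prime $\mathfrak{p}$ of $R$ lying over $p$. The counting argument of Proposition \ref{prop:verify} (finitely many subsets, finitely many nonzero determinants, hence finitely many excluded primes) applies verbatim to that reduction, and it is this version, combined with Lemma \ref{lem:badprimes}, that connects the present existence statement to the explicit list in Theorem \ref{thm:60prime}.
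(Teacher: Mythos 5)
Your proposal is correct and follows exactly the paper's own route: the published proof is a one-line citation of Proposition \ref{prop:60set} together with Proposition \ref{prop:verify}, which is precisely the reduction you carry out. The extra details you supply --- the transitivity of $G$ on the $60$ points, the Dirichlet-type argument that suitable primes are infinite in number, and the careful remark that the $\PG(4,p)$ examples with $p \equiv 1 \pmod{11}$ come from reduction modulo a degree-one prime of $\Z[\xi]$ rather than from $f$ remaining irreducible modulo $p$ --- are sound elaborations of points the paper leaves implicit, not a different method.
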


\begin{proof}
 This follows from Proposition \ref{prop:60set} and Proposition \ref{prop:verify}.
\end{proof}

\subsection{Examples}
With the aid of $\GAP$ and the package FinInG \cite{fining}, we are able to calculate specific primes and prime powers $q$ such that there is a $\PSL(2,11)$-transitive $60$-arc in $\PG(4,q)$.  

Assume that $p$ is a prime and that there exists a faithful representation of $\PSL(2,11)$ over $\GF(p)$.  In this case, we need the constant $c$ to exist in $\GF(p)$; hence, we assume that $p^5 \equiv 1 \pmod {11}$.  Moreover, the eigenvalue we choose is actually a primitive $11^\text{th}$ root of unity, so we assume further that $p \equiv 1 \pmod {11}$.  We are now ready to prove Theorem \ref{thm:60prime}.    

\begin{proof}[Proof of Theorem \ref{thm:60prime}]
 That there are infinitely many such primes follows from the fact that $\GF(p) = \Z[\xi_{11}]^{\phi_{p}}$ whenever $p \equiv 1 \pmod {11}$ by the above discussion, Proposition \ref{prop:60set}, and Proposition \ref{prop:verify}.  
 
 By Lemma \ref{lem:badprimes} and Calculation \ref{calc:goodprimes60}, the possible primes $p$ for which $\PG(4,p)$ does not contain a $60$-arc are explicitly known.  These calculations have further been verified for all primes $p$ such that $p^5 \equiv 1 \pmod {11}$ and $p<65000$ using Functions \ref{func:60arc} and \ref{func:60arcs}.  (Function \ref{func:60arc} is called by Function \ref{func:60arcs}, and Function \ref{func:span} is called by Function \ref{func:60arc}.)  Moreover, we have verified all of these calculations for both irreducible $5$-dimensional representations of $\PSL(2,11)$ over $\C$, although these are omitted since they are analogous to the ones listed.
\end{proof}

\noindent\textsc{Acknowledgements.}  The authors would like to thank John Bamberg for helpful discussions about the $\GAP$ package FinInG and Jan De Beule for $\GAP$ code that allows for calculations using FinInG beyond what is normally possible in $\GAP$.  The authors would also like to thank the anonymous referees for many useful suggestions that improved the readability of this paper.

\appendix
\section{GAP code and calculations}
\label{sect:code}
We include in this section the $\GAP$ code used in this paper.

\begin{rem}
 \label{rem:GAPPSL}
The following code, taken directly from \cite{onlineatlas}, can be used to generate one of the $5$-dimensional complex irreducible representations of $\PSL(2,11)$. 
 \begin{verbatim}
# Character: X2
# Comment: perm rep on 55a pts
# Ind: 0
# Ring: C
# Sparsity: 62%
# Checker result: pass
# Conjugacy class representative result: pass
local a, A, b, B, c, C, w, W, i, result, delta, idmat;
result := rec();
w := E(3); W := E(3)^2;
a := E(5)+E(5)^4; A := -1-a; # b5, b5*
b := E(7)+E(7)^2+E(7)^4; B := -1-b;  # b7, b7**
c := E(11)+E(11)^3+E(11)^4+E(11)^5+E(11)^9; C := -1-c; # b11, b11**
i := E(4);
result.comment := "L211 as 5 x 5 matrices\n";
result.generators := [
[[0,1,0,0,0],
[1,0,0,0,0],
[0,0,0,0,1],
[1,-1,c,1,-c],
[0,0,1,0,0]]
,
[[0,0,0,1,0],
[0,0,1,0,0],
[0,-1,-1,0,0],
[-C,0,0,-1,-c-2*C],
[1,0,0,-1,1]]];
return result;
 \end{verbatim}

\end{rem}

\begin{calc}
 \label{calc:110}
 The following calculation is used for the proof of Proposition \ref{prop:110set}.  This calculation took approximately 6.8 GB of RAM to complete.
 \begin{verbatim}
gap> c := E(11)+E(11)^3+E(11)^4+E(11)^5+E(11)^9; C := -1-c;
E(11)+E(11)^3+E(11)^4+E(11)^5+E(11)^9
E(11)^2+E(11)^6+E(11)^7+E(11)^8+E(11)^10
gap> A:= [[0,1,0,0,0],
> [1,0,0,0,0],
> [0,0,0,0,1],
> [1,-1,c,1,-c],
> [0,0,1,0,0]];
[ [ 0, 1, 0, 0, 0 ], [ 1, 0, 0, 0, 0 ], [ 0, 0, 0, 0, 1 ], 
  [ 1, -1, E(11)+E(11)^3+E(11)^4+E(11)^5+E(11)^9, 1, 
      -E(11)-E(11)^3-E(11)^4-E(11)^5-E(11)^9 ], [ 0, 0, 1, 0, 0 ] ]
gap> B:= [[0,0,0,1,0],
> [0,0,1,0,0],
> [0,-1,-1,0,0],
> [-C,0,0,-1,-c-2*C],
> [1,0,0,-1,1]];
[ [ 0, 0, 0, 1, 0 ], [ 0, 0, 1, 0, 0 ], [ 0, -1, -1, 0, 0 ], 
  [ -E(11)^2-E(11)^6-E(11)^7-E(11)^8-E(11)^10, 0, 0, -1, 
      -E(11)-2*E(11)^2-E(11)^3-E(11)^4-E(11)^5-2*E(11)^6-2*E(11)^7-2*E(11)^8
         -E(11)^9-2*E(11)^10 ], [ 1, 0, 0, -1, 1 ] ]
gap> G:= Group(A,B);
<matrix group with 2 generators>
gap> Order(G);
660
gap> M:= A*B*A*B*A*B*A*B*B;
[ [ -1, 2, -2*E(11)-E(11)^2-2*E(11)^3-2*E(11)^4-2*E(11)^5-E(11)^6-E(11)^7
         -E(11)^8-2*E(11)^9-E(11)^10, -1, 
      E(11)+E(11)^3+E(11)^4+E(11)^5+E(11)^9 ], 
  [ 0, -2*E(11)-E(11)^2-2*E(11)^3-2*E(11)^4-2*E(11)^5-E(11)^6-E(11)^7-E(11)^8
         -2*E(11)^9-E(11)^10, E(11)^2+E(11)^6+E(11)^7+E(11)^8+E(11)^10, 
      E(11)+E(11)^3+E(11)^4+E(11)^5+E(11)^9, 2 ], 
  [ 1, -2, E(11)+E(11)^3+E(11)^4+E(11)^5+E(11)^9, 1, 
      E(11)^2+E(11)^6+E(11)^7+E(11)^8+E(11)^10 ], 
  [ -1, E(11)+E(11)^3+E(11)^4+E(11)^5+E(11)^9, 
      -E(11)-2*E(11)^2-E(11)^3-E(11)^4-E(11)^5-2*E(11)^6-2*E(11)^7-2*E(11)^8
         -E(11)^9-2*E(11)^10, E(11)^2+E(11)^6+E(11)^7+E(11)^8+E(11)^10, 
      3*E(11)+2*E(11)^2+3*E(11)^3+3*E(11)^4+3*E(11)^5+2*E(11)^6+2*E(11)^7
         +2*E(11)^8+3*E(11)^9+2*E(11)^10 ], 
  [ -E(11)^2-E(11)^6-E(11)^7-E(11)^8-E(11)^10, 0, 0, -1, 
      -E(11)-2*E(11)^2-E(11)^3-E(11)^4-E(11)^5-2*E(11)^6-2*E(11)^7-2*E(11)^8
         -E(11)^9-2*E(11)^10 ] ]
gap> Order(M);
6
gap> F:= Field(E(33));
CF(33)
gap> eigs:= Eigenvectors(F,M);
[ [ 1, 1/3*E(11)^2+1/3*E(11)^6+1/3*E(11)^7+1/3*E(11)^8+1/3*E(11)^10, 0, 
      -1/3*E(11)^2-1/3*E(11)^6-1/3*E(11)^7-1/3*E(11)^8-1/3*E(11)^10, 
      -E(11)-1/3*E(11)^2-E(11)^3-E(11)^4-E(11)^5-1/3*E(11)^6-1/3*E(11)^7
         -1/3*E(11)^8-E(11)^9-1/3*E(11)^10 ], 
  [ 1, -5/3*E(33)^2-E(33)^5-2/3*E(33)^7-5/3*E(33)^8-2/3*E(33)^10-2/3*E(33)^13
         -E(33)^14-5/3*E(33)^17-2/3*E(33)^19-E(33)^20-E(33)^23-E(33)^26
         -2/3*E(33)^28-5/3*E(33)^29-5/3*E(33)^32, 
      -E(33)-5/3*E(33)^2-E(33)^4-2*E(33)^5-4/3*E(33)^7-5/3*E(33)^8
         -4/3*E(33)^10-4/3*E(33)^13-2*E(33)^14-E(33)^16-5/3*E(33)^17
         -4/3*E(33)^19-2*E(33)^20-2*E(33)^23-E(33)^25-2*E(33)^26-4/3*E(33)^28
         -5/3*E(33)^29-E(33)^31-5/3*E(33)^32, 
      4/3*E(33)^2+E(33)^5+E(33)^7+4/3*E(33)^8+E(33)^10+E(33)^13+E(33)^14
         +4/3*E(33)^17+E(33)^19+E(33)^20+E(33)^23+E(33)^26+E(33)^28
         +4/3*E(33)^29+4/3*E(33)^32, 
      2*E(33)+2*E(33)^4+E(33)^5+4/3*E(33)^7+4/3*E(33)^10+4/3*E(33)^13+E(33)^14
         +2*E(33)^16+4/3*E(33)^19+E(33)^20+E(33)^23+2*E(33)^25+E(33)^26
         +4/3*E(33)^28+2*E(33)^31 ], 
  [ 1, -E(33)-2/3*E(33)^2-E(33)^4-5/3*E(33)^7-2/3*E(33)^8-5/3*E(33)^10
         -5/3*E(33)^13-E(33)^16-2/3*E(33)^17-5/3*E(33)^19-E(33)^25
         -5/3*E(33)^28-2/3*E(33)^29-E(33)^31-2/3*E(33)^32, 
      -2*E(33)-4/3*E(33)^2-2*E(33)^4-E(33)^5-5/3*E(33)^7-4/3*E(33)^8
         -5/3*E(33)^10-5/3*E(33)^13-E(33)^14-2*E(33)^16-4/3*E(33)^17
         -5/3*E(33)^19-E(33)^20-E(33)^23-2*E(33)^25-E(33)^26-5/3*E(33)^28
         -4/3*E(33)^29-2*E(33)^31-4/3*E(33)^32, 
      E(33)+E(33)^2+E(33)^4+4/3*E(33)^7+E(33)^8+4/3*E(33)^10+4/3*E(33)^13
         +E(33)^16+E(33)^17+4/3*E(33)^19+E(33)^25+4/3*E(33)^28+E(33)^29
         +E(33)^31+E(33)^32, 
      E(33)+4/3*E(33)^2+E(33)^4+2*E(33)^5+4/3*E(33)^8+2*E(33)^14+E(33)^16
         +4/3*E(33)^17+2*E(33)^20+2*E(33)^23+E(33)^25+2*E(33)^26+4/3*E(33)^29
         +E(33)^31+4/3*E(33)^32 ], 
  [ 1, 5/3*E(33)+5/3*E(33)^4+4/3*E(33)^5+E(33)^7+E(33)^10+E(33)^13
         +4/3*E(33)^14+5/3*E(33)^16+E(33)^19+4/3*E(33)^20+4/3*E(33)^23
         +5/3*E(33)^25+4/3*E(33)^26+E(33)^28+5/3*E(33)^31, 
      -2*E(33)^2-E(33)^5-E(33)^7-2*E(33)^8-E(33)^10-E(33)^13-E(33)^14
         -2*E(33)^17-E(33)^19-E(33)^20-E(33)^23-E(33)^26-E(33)^28-2*E(33)^29
         -2*E(33)^32, -E(33)+E(33)^2-E(33)^4+E(33)^8-E(33)^16+E(33)^17
         -E(33)^25+E(33)^29-E(33)^31+E(33)^32, 
      7/3*E(33)+2*E(33)^2+7/3*E(33)^4+8/3*E(33)^5+2*E(33)^7+2*E(33)^8
         +2*E(33)^10+2*E(33)^13+8/3*E(33)^14+7/3*E(33)^16+2*E(33)^17
         +2*E(33)^19+8/3*E(33)^20+8/3*E(33)^23+7/3*E(33)^25+8/3*E(33)^26
         +2*E(33)^28+2*E(33)^29+7/3*E(33)^31+2*E(33)^32 ], 
  [ 1, 4/3*E(33)+E(33)^2+4/3*E(33)^4+5/3*E(33)^5+E(33)^8+5/3*E(33)^14
         +4/3*E(33)^16+E(33)^17+5/3*E(33)^20+5/3*E(33)^23+4/3*E(33)^25
         +5/3*E(33)^26+E(33)^29+4/3*E(33)^31+E(33)^32, 
      -E(33)-E(33)^2-E(33)^4-2*E(33)^7-E(33)^8-2*E(33)^10-2*E(33)^13-E(33)^16
         -E(33)^17-2*E(33)^19-E(33)^25-2*E(33)^28-E(33)^29-E(33)^31-E(33)^32, 
      -E(33)^5+E(33)^7+E(33)^10+E(33)^13-E(33)^14+E(33)^19-E(33)^20-E(33)^23
         -E(33)^26+E(33)^28, 
      8/3*E(33)+2*E(33)^2+8/3*E(33)^4+7/3*E(33)^5+2*E(33)^7+2*E(33)^8
         +2*E(33)^10+2*E(33)^13+7/3*E(33)^14+8/3*E(33)^16+2*E(33)^17
         +2*E(33)^19+7/3*E(33)^20+7/3*E(33)^23+8/3*E(33)^25+7/3*E(33)^26
         +2*E(33)^28+2*E(33)^29+8/3*E(33)^31+2*E(33)^32 ] ]
gap> Length(eigs);
5
gap> eigvals:= Eigenvalues(F,M);
[ 1, E(3), E(3)^2, -E(3)^2, -E(3) ]
gap> eigs[2]*M = eigvals[2]*eigs[2];
true
gap> H:= Group(M);
<matrix group with 1 generators>
gap> Order(H);
6
gap> reps:= List(RightCosets(G,H), Representative);;
gap> Length(reps);
110
gap> set:= List(reps, i -> eigs[2]*i);;
gap> set[1] = eigs[2];
true
gap> preset:= List([2..110], i -> set[i]);;
gap> precomb:= Combinations(preset, 4);;

#Since PSL(2,11) is transitive on the set of subspaces generated by the 
#vectors in "set," if there is a linear dependence among any five vectors,
#then there is a linear dependence among some set of five vectors that
#includes the first element of "set."  This allows us to save memory and
#time in the calculation.

gap> Length(precomb);
5563251
gap> sets:= List(precomb, i -> Concatenation([set[1]], i));;
gap> dets110:= List(sets, i -> DeterminantMat(i));;
gap> Length(dets110);
5563251
gap> 0 in dets110;
false
 \end{verbatim}
\end{calc}

\begin{calc}
 \label{calc:60}
 The following calculation is used for the proof of Proposition \ref{prop:60set}.
 \begin{verbatim}
gap> c := E(11)+E(11)^3+E(11)^4+E(11)^5+E(11)^9; C := -1-c;
E(11)+E(11)^3+E(11)^4+E(11)^5+E(11)^9
E(11)^2+E(11)^6+E(11)^7+E(11)^8+E(11)^10
gap> A:= [[0,1,0,0,0],
> [1,0,0,0,0],
> [0,0,0,0,1],
> [1,-1,c,1,-c],
> [0,0,1,0,0]];
[ [ 0, 1, 0, 0, 0 ], [ 1, 0, 0, 0, 0 ], [ 0, 0, 0, 0, 1 ], 
  [ 1, -1, E(11)+E(11)^3+E(11)^4+E(11)^5+E(11)^9, 1, 
      -E(11)-E(11)^3-E(11)^4-E(11)^5-E(11)^9 ], [ 0, 0, 1, 0, 0 ] ]
gap> B:= [[0,0,0,1,0],
> [0,0,1,0,0],
> [0,-1,-1,0,0],
> [-C,0,0,-1,-c-2*C],
> [1,0,0,-1,1]];
[ [ 0, 0, 0, 1, 0 ], [ 0, 0, 1, 0, 0 ], [ 0, -1, -1, 0, 0 ], 
  [ -E(11)^2-E(11)^6-E(11)^7-E(11)^8-E(11)^10, 0, 0, -1, 
      -E(11)-2*E(11)^2-E(11)^3-E(11)^4-E(11)^5-2*E(11)^6-2*E(11)^7-2*E(11)^8
         -E(11)^9-2*E(11)^10 ], [ 1, 0, 0, -1, 1 ] ]
gap> G:= Group(A,B);
<matrix group with 2 generators>
gap> Order(G);
660
gap> M:= A*B;
[ [ 0, 0, 1, 0, 0 ], [ 0, 0, 0, 1, 0 ], [ 1, 0, 0, -1, 1 ], 
  [ 1, -E(11)-E(11)^3-E(11)^4-E(11)^5-E(11)^9, 
      E(11)^2+E(11)^6+E(11)^7+E(11)^8+E(11)^10, 
      E(11)+E(11)^3+E(11)^4+E(11)^5+E(11)^9, 2 ], [ 0, -1, -1, 0, 0 ] ]
gap> Order(M);
11
gap> F:= Field(E(11));
CF(11)
gap> eigs:= Eigenvectors(F,M);
[ [ 1, E(11)^2-E(11)^9, E(11)+E(11)^2+E(11)^10, -E(11)^2-E(11)^10, 
      -2*E(11)-E(11)^2-E(11)^3-E(11)^4-E(11)^5-E(11)^6-E(11)^7-E(11)^8
         -2*E(11)^9-E(11)^10 ], 
  [ 1, -E(11)^5+E(11)^6, E(11)^3+E(11)^6+E(11)^8, -E(11)^6-E(11)^8, 
      -E(11)-E(11)^2-2*E(11)^3-E(11)^4-2*E(11)^5-E(11)^6-E(11)^7-E(11)^8
         -E(11)^9-E(11)^10 ], 
  [ 1, -E(11)^3+E(11)^8, E(11)^4+E(11)^7+E(11)^8, -E(11)^7-E(11)^8, 
      -E(11)-E(11)^2-2*E(11)^3-2*E(11)^4-E(11)^5-E(11)^6-E(11)^7-E(11)^8
         -E(11)^9-E(11)^10 ], 
  [ 1, -E(11)+E(11)^10, E(11)^5+E(11)^6+E(11)^10, -E(11)^6-E(11)^10, 
      -2*E(11)-E(11)^2-E(11)^3-E(11)^4-2*E(11)^5-E(11)^6-E(11)^7-E(11)^8
         -E(11)^9-E(11)^10 ], 
  [ 1, -E(11)^4+E(11)^7, E(11)^2+E(11)^7+E(11)^9, -E(11)^2-E(11)^7, 
      -E(11)-E(11)^2-E(11)^3-2*E(11)^4-E(11)^5-E(11)^6-E(11)^7-E(11)^8
         -2*E(11)^9-E(11)^10 ] ]
gap> Length(eigs);
5
gap> eigvals:= Eigenvalues(F,M);
[ E(11), E(11)^3, E(11)^4, E(11)^5, E(11)^9 ]
gap> eigs[1]*M = eigvals[1]*eigs[1];
true
gap> H:= Group(M);
<matrix group with 1 generators>
gap> Order(H);
11
gap> reps:= List(RightCosets(G,H), Representative);;
gap> Length(reps);
60
gap> set:= List(reps, i -> eigs[1]*i);;
gap> set[1] = eigs[1];
true
gap> preset:= List([2..60], i -> set[i]);;
gap> precomb:= Combinations(preset,4);;
gap> Length(precomb);
455126
gap> sets:= List(precomb, i -> Concatenation([set[1]], i));;
gap> dets60:= List(sets, i -> DeterminantMat(i));;
gap> Length(dets60);
455126
gap> 0 in dets60;
false  
 \end{verbatim}
\end{calc}
 
\begin{calc}
 \label{calc:goodprimes110}
The following calculation in $\GAP$ demonstrates the primes $p$ for which there may not exist a $\PSL(2,11)$-transitive $110$-arc in $\PG(4,p)$ or $\PG(4,p^2)$. 
 \begin{verbatim}
gap> const:= [];; Q:= Field(1);; for r in dets110 do 
pref:= MinimalPolynomial(Rationals,r); 
f:= DenominatorOfRationalFunction(pref)*pref; 
Add(const, CoefficientsOfUnivariatePolynomial(f)[1]); od;
gap> badprimes110:= [];; for c in const do pd:= PrimeDivisors(c); 
for p in pd do if not p in badprimes110 then Add(badprimes110, p); fi; od; od;
gap> Length(badprimes110);
2793
gap> max:= Maximum(badprimes110);
5373427
gap> badprimes1:= Filtered(badprimes110, p -> (p^5 mod 33 = 1));;    
gap> badprimes2:= Filtered(badprimes110, p -> (p^5 mod 33 = 23));;
gap> Length(badprimes1);
2728
gap> Length(badprimes2);
25
gap> Maximum(badprimes2);
3917
gap> badprimes2;
[ 5, 53, 23, 59, 47, 89, 191, 71, 113, 137, 269, 863, 251, 383, 179, 797, 
  929, 683, 3917, 353, 1871, 647, 257, 1439, 971 ]
  \end{verbatim}

\end{calc} 
 
\begin{calc}
 \label{calc:goodprimes60}
The following calculation in $\GAP$ demonstrates exactly which primes $p$ are such that there exists a $\PSL(2,11)$-transitive $60$-arc in $\PG(4,p)$. 
 \begin{verbatim}
gap> const:= [];; Q:= Field(1);; for r in dets60 do 
pref:= MinimalPolynomial(Rationals,r); 
f:= DenominatorOfRationalFunction(pref)*pref; 
Add(const, CoefficientsOfUnivariatePolynomial(f)[1]); od;
gap> badprimes60:= [];; for c in const do pd:= PrimeDivisors(c); 
for p in pd do if not p in badprimes60 then Add(badprimes60, p); fi; od; od;
gap> Length(badprimes60);
57
gap> badprimes60;
[ 11, 67, 23, 3, 419, 89, 331, 397, 727, 2179, 2377, 353, 199, 2663, 
2267, 1013, 4049, 881, 2927, 1123, 6491, 617, 2113, 1409, 1607, 463, 
859, 683, 661, 2, 1321, 947, 3719, 2531, 1871, 991, 2311, 3037, 3191, 
2003, 3851, 43, 3917, 3389, 3323, 3433, 2729, 3697, 5, 3257, 7591, 
11903, 1453, 4093, 3301, 7877, 109 ]
gap> max:= Maximum(badprimes60);
11903
gap> possprimes:= Filtered([3..11903], p -> IsPrime(p) and (p mod 11 = 1));;
gap> Length(possprimes);
141
gap> goodprimes:= Filtered(possprimes, p -> not p in badprimes60);
[ 1277, 1783, 2069, 2333, 2399, 2707, 2861, 2971, 3169, 3499, 3631, 
4027, 4159, 4357, 4423, 4621, 4643, 4951, 4973, 5039, 5171, 5237, 5281, 
5303, 5347, 5413, 5479, 5501, 5743, 5897, 6007, 6029, 6073, 6271, 6337, 
6359, 6469, 6689, 6733, 6997, 7019, 7129, 7151, 7283, 7349, 7393, 7459, 
7481, 7547, 7723, 7789, 8009, 8053, 8273, 8317, 8537, 8581, 8647, 8669, 
8713, 8779, 8867, 8933, 8999, 9043, 9109, 9241, 9439, 9461, 9769, 9791, 
9857, 9901, 9923, 9967, 10099, 10253, 10429, 10627, 10781, 10847, 10891, 
10957, 10979, 11177, 11243, 11287, 11353, 11551, 11617 ]
  \end{verbatim}

\end{calc}

\begin{func}
\label{func:span}
The following function is used in subsequent functions to calculate the projective dimension of a space that is spanned by a few projective points.
\begin{verbatim}
SpanSizes:= function(orbit, size)

local neworblist, l, comb, sets, spans, spansizes, point;

point:= orbit[1];
l:= Length(orbit);
neworblist:= List([2..l], i -> orbit[i]);;
comb:= Combinations(neworblist, size-1);;
sets:= List(comb, i -> Concatenation([point], i));;
spans:= List(sets, Span);;
spansizes:= List(spans, ProjectiveDimension);;
return [Collected(spansizes)];
end;
\end{verbatim}
\end{func}

\begin{func} 
\label{func:110arcp}
The following function checks whether $\PG(4,p)$ contains a $\PSL(2,11)$-transitive $110$-arc, where $p$ is a prime.
 \begin{verbatim}
110arcp:= function(p)

local l, cand, c, C, m1, m2, mat, eigs, test, pos, PS, P, v, H, o, 
list, i, vects, orbs;

if not (((p^5 mod 11) = 1) and ((p mod 3) = 1)) then
    return "Bad choice of p";
fi;
l:= List([0..p-1], i -> Z(p)^i);
cand:= Filtered(l, i -> i^2 + i + 3 = 0*Z(p));
c:= cand[1];;
C:= -1*Z(p)^0 - c;;
m1:= [[0*Z(p),Z(p)^0,0*Z(p),0*Z(p),0*Z(p)],
  [Z(p)^0,0*Z(p),0*Z(p),0*Z(p),0*Z(p)],
  [0*Z(p),0*Z(p),0*Z(p),0*Z(p),Z(p)^0],
  [Z(p)^0,-Z(p)^0,c,Z(p)^0,-c],
  [0*Z(p),0*Z(p),Z(p)^0,0*Z(p),0*Z(p)]];;
m2:= [[0*Z(p),0*Z(p),0*Z(p),Z(p)^0,0*Z(p)],
  [0*Z(p),0*Z(p),Z(p)^0,0*Z(p),0*Z(p)],
  [0*Z(p),-Z(p)^0,-Z(p)^0,0*Z(p),0*Z(p)],
  [-C,0*Z(p),0*Z(p),-Z(p)^0,-c-2*C],
  [Z(p)^0,0*Z(p),0*Z(p),-Z(p)^0,Z(p)^0]];;
H:= Group(Projectivity(m1, GF(p)), Projectivity(m2, GF(p)));
mat:= m1*m2*m1*m2*m1*m2*m1*m2*m2;;
eigs:= Eigenvectors(GF(p), mat);
PS:= PG(4,p);;
vects:= List(eigs, i -> VectorSpaceToElement(PS,i));
orbs:= DuplicateFreeList(List(vects, i -> AsSet(FiningOrbit(H,i))));
list:= [];;
for o in orbs do
    if Length(o) = 110 then
        Add(list, [SpanSizes(o,5), FiningOrbit(H,o[1])]);
    fi;
od;
return list;
end;
\end{verbatim}
\end{func}

\begin{func}
\label{func:110arcsp}
The following function checks which entries $p$ in a list of primes are such that $\PG(4,p)$ contains a $\PSL(2,11)$-transitive $110$-arc.
\begin{verbatim}
110arcsp:= function(list)

local p, l, i, temp;

l:= [];;
for p in list do    
        temp:= 110arcp(p);
        Print([p, temp], "\n");
        for i in temp do
            if Length(i[1][1]) = 1 then 
                Add(l, p);
            fi;
        od;    
od;
return l;
end;  
 \end{verbatim}

\end{func}

\begin{func}
\label{func:110arc2}
The following function checks whether $\PG(4,p^2)$ contains a $\PSL(2,11)$-transitive $110$-arc, where $p$ is a prime.

\begin{verbatim}
110arc:= function(p)

local l, cand, c, C, m1, m2, mat, eigs, test, pos, PS, P, v, H, o, list, i, 
vects, orbs;

if not ((p^5 mod 11) = 1) then
    return "Bad choice of p";
fi;
l:= List([0..p-1], i -> Z(p)^i);
cand:= Filtered(l, i -> i^2 + i + 3 = 0*Z(p));
c:= cand[1];;
C:= -1*Z(p)^0 - c;;
m1:= [[0*Z(p),Z(p)^0,0*Z(p),0*Z(p),0*Z(p)],
  [Z(p)^0,0*Z(p),0*Z(p),0*Z(p),0*Z(p)],
  [0*Z(p),0*Z(p),0*Z(p),0*Z(p),Z(p)^0],
  [Z(p)^0,-Z(p)^0,c,Z(p)^0,-c],
  [0*Z(p),0*Z(p),Z(p)^0,0*Z(p),0*Z(p)]];;
m2:= [[0*Z(p),0*Z(p),0*Z(p),Z(p)^0,0*Z(p)],
  [0*Z(p),0*Z(p),Z(p)^0,0*Z(p),0*Z(p)],
  [0*Z(p),-Z(p)^0,-Z(p)^0,0*Z(p),0*Z(p)],
  [-C,0*Z(p),0*Z(p),-Z(p)^0,-c-2*C],
  [Z(p)^0,0*Z(p),0*Z(p),-Z(p)^0,Z(p)^0]];;
H:= Group(Projectivity(m1, GF(p^2)), Projectivity(m2, GF(p^2)));
mat:= m1*m2*m1*m2*m1*m2*m1*m2*m2;;
eigs:= Eigenvectors(GF(p^2), mat);
PS:= PG(4,p^2);;
vects:= List(eigs, i -> VectorSpaceToElement(PS,i));
orbs:= DuplicateFreeList(List(vects, i -> AsSet(FiningOrbit(H,i))));
list:= [];;
for o in orbs do
    if Length(o) = 110 then
        Add(list, [SpanSizes(o,5), FiningOrbit(H,o[1])]);
    fi;
od;
return list;
end;
\end{verbatim}
\end{func}

\begin{func}
\label{func:110arcs2}
The following function checks which entries $p$ in a list of primes are such that $\PG(4,p^2)$ contains a $\PSL(2,11)$-transitive $110$-arc.
\begin{verbatim}
110arcs:= function(list)

local p, l;

l:= [];;
for p in list do    
        temp:= 110arc(p);
        Print([p, temp], "\n");
        for i in temp do
            if Length(i[1][1]) = 1 then 
                Add(l, p);
            fi;
        od;    
od;
return l;
end;
\end{verbatim}
\end{func}

\begin{func}
\label{func:60arc}
The following function checks whether $\PG(4,p)$ contains a $\PSL(2,11)$-transitive $60$-arc, where $p$ is a prime.
\begin{verbatim}
60arc:= function(p)

local l, cand, c, C, m1, m2, mat, eigs, test, pos, PS, P, v, H, 
o, list, i, vects, orbs;

if not ((p mod 11) = 1) then
    return "Bad choice of p";
fi;
l:= List([0..p-1], i -> Z(p)^i);
cand:= Filtered(l, i -> i^2 + i + 3 = 0*Z(p));
c:= cand[1];;
C:= -1*Z(p)^0 - c;;
m1:= [[0*Z(p),Z(p)^0,0*Z(p),0*Z(p),0*Z(p)],
  [Z(p)^0,0*Z(p),0*Z(p),0*Z(p),0*Z(p)],
  [0*Z(p),0*Z(p),0*Z(p),0*Z(p),Z(p)^0],
  [Z(p)^0,-Z(p)^0,c,Z(p)^0,-c],
  [0*Z(p),0*Z(p),Z(p)^0,0*Z(p),0*Z(p)]];;
m2:= [[0*Z(p),0*Z(p),0*Z(p),Z(p)^0,0*Z(p)],
  [0*Z(p),0*Z(p),Z(p)^0,0*Z(p),0*Z(p)],
  [0*Z(p),-Z(p)^0,-Z(p)^0,0*Z(p),0*Z(p)],
  [-C,0*Z(p),0*Z(p),-Z(p)^0,-c-2*C],
  [Z(p)^0,0*Z(p),0*Z(p),-Z(p)^0,Z(p)^0]];;
H:= Group(Projectivity(m1, GF(p)), Projectivity(m2, GF(p)));
mat:= m1*m2;;
eigs:= Eigenvectors(GF(p), mat);
PS:= PG(4,p);;
vects:= List(eigs, i -> VectorSpaceToElement(PS,i));
orbs:= DuplicateFreeList(List(vects, i -> AsSet(FiningOrbit(H,i))));
list:= [];;
for o in orbs do
    if Length(o) = 60 then
        Add(list, [SpanSizes(o,5), FiningOrbit(H,o[1])]);
    fi;
od;
return list;
end;
\end{verbatim}
\end{func}

\begin{func}
\label{func:60arcs}
The following function checks which entries $p$ in a list of primes are such that $\PG(4,p)$ contains a $\PSL(2,11)$-transitive $60$-arc.
\begin{verbatim}
60arcs:= function(list)

local p;

l:= [];;
for p in list do    
        temp:= 60arc(p);
        Print([p, temp], "\n");
        for i in temp do
            if Length(i[1][1]) = 1 then 
                Add(l, p);
            fi;
        od;    
od;
return l;
end;    
\end{verbatim}
\end{func}

\begin{calc}
\label{calc:170}
The following calculation demonstrates that the union of a $60$-arc and a $110$-arc will be a $170$-arc for infinitely many primes $p$. 
\begin{verbatim}
gap> c := E(11)+E(11)^3+E(11)^4+E(11)^5+E(11)^9; C := -1-c;
E(11)+E(11)^3+E(11)^4+E(11)^5+E(11)^9
E(11)^2+E(11)^6+E(11)^7+E(11)^8+E(11)^10
gap>  A:= [[0,1,0,0,0],
> [1,0,0,0,0],
> [0,0,0,0,1],
> [1,-1,c,1,-c],
> [0,0,1,0,0]];
[ [ 0, 1, 0, 0, 0 ], [ 1, 0, 0, 0, 0 ], [ 0, 0, 0, 0, 1 ], 
[ 1, -1, E(11)+E(11)^3+E(11)^4+E(11)^5+E(11)^9, 1, 
-E(11)-E(11)^3-E(11)^4-E(11)^5-E(11)^9 ],[ 0, 0, 1, 0, 0 ] ]
gap>  A:= [[0,1,0,0,0],
> [1,0,0,0,0],
> [0,0,0,0,1],
> [1,-1,c,1,-c],
> [0,0,1,0,0]];
[ [ 0, 1, 0, 0, 0 ], [ 1, 0, 0, 0, 0 ], [ 0, 0, 0, 0, 1 ], 
[ 1, -1, E(11)+E(11)^3+E(11)^4+E(11)^5+E(11)^9, 1, 
-E(11)-E(11)^3-E(11)^4-E(11)^5-E(11)^9 ], [ 0, 0, 1, 0, 0 ] ]
gap>  B:= [[0,0,0,1,0],
> [0,0,1,0,0],
> [0,-1,-1,0,0],
> [-C,0,0,-1,-c-2*C],
> [1,0,0,-1,1]];
[ [ 0, 0, 0, 1, 0 ], [ 0, 0, 1, 0, 0 ], [ 0, -1, -1, 0, 0 ], 
[ -E(11)^2-E(11)^6-E(11)^7-E(11)^8-E(11)^10, 0, 0, -1, 
-E(11)-2*E(11)^2-E(11)^3-E(11)^4-E(11)^5-2*E(11)^6-2*E(11)^7
-2*E(11)^8-E(11)^9-2*E(11)^10 ], 
  [ 1, 0, 0, -1, 1 ] ]
gap>  G:= Group(A,B);
<matrix group with 2 generators>
gap> Order(G);
660
gap> M6:= A*B*A*B*A*B*A*B*B;;
gap> Order(M6);
6
gap> M11:=A*B;;  
gap> Order(M11);
11
gap> F:= Field(E(33));;
gap> eigvals11:= Eigenvalues(F,M11);
[ E(11), E(11)^3, E(11)^4, E(11)^5, E(11)^9 ]
gap> eigvals6:=Eigenvalues(F,M6);
[ 1, E(3), E(3)^2, -E(3)^2, -E(3) ]
gap> eigs6:= Eigenvectors(F,M6);;
gap> eigs11:= Eigenvectors(F,M11);;
gap> H6:=Group(M6);     
<matrix group with 1 generators>
gap> Order(H6);
6
gap> reps110:= List(RightCosets(G,H6), Representative);;
gap> Length(reps110);
110
gap> H11:=Group(M11);     
<matrix group with 1 generators>
gap> Order(H11);
11
gap> reps60:= List(RightCosets(G,H11), Representative);;
gap> Length(reps60); 
60
gap> set110:= List(reps, i -> eigs6[2]*i);; 
gap> set110:= List(reps110, i -> eigs6[2]*i);;
gap> set60:= List(reps60, i -> eigs11[1]*i);; 
gap> set:=[];; set:=Concatenation(set60,set110);
gap> IsDuplicateFreeList(set);
true
gap> Length(set);                           
170
gap> preset:= List([2..170], i -> set[i]);;
gap> precomb:= Combinations(preset, 4);;
gap> Length(precomb);
32795126
gap> sets:= List(precomb, i -> Concatenation([set[1]], i));;
gap> dets170:= List(sets, i -> DeterminantMat(i));;
gap> Length(dets170);
32795126
gap> 0 in dets170;
false
\end{verbatim}
\end{calc}

\end{document}